\definecolor{maccolor}{rgb}{0.3,0.3,0.8}
\lstdefinelanguage{Macaulay2}
{
basicstyle={\ttfamily},
keywordstyle={\color{maccolor!80!black}},
commentstyle={\color{gray}},
stringstyle={\color{red!40!black}},
rulecolor=\color{maccolor},
basewidth={1.2ex}, 
sensitive=false,
morecomment=[l]{--},
morecomment=[s]{-*}{*-},
morestring=[b]",
escapechar={`},
escapebegin={\rmfamily},
morekeywords={about,abs,AbstractToricVarieties,accumulate,Acknowledgement,acos,acosh,acot,addCancelTask,addDependencyTask,addEndFunction,addHook,AdditionalPaths,addStartFunction,addStartTask,Adjacent,adjoint,AdjointIdeal,AffineVariety,AfterEval,AfterNoPrint,AfterPrint,agm,AInfinity,alarm,AlgebraicSplines,Algorithm,Alignment,all,AllCodimensions,allowableThreads,ambient,analyticSpread,Analyzer,AnalyzeSheafOnP1,ancestor,ancestors,ANCHOR,and,andP,AngleBarList,ann,annihilator,antipode,any,append,applicationDirectory,applicationDirectorySuffix,apply,applyKeys,applyPairs,applyTable,applyValues,apropos,argument,Array,arXiv,Ascending,ascii,asin,asinh,ass,assert,associatedGradedRing,associatedPrimes,AssociativeAlgebras,AssociativeExpression,atan,atan2,atEndOfFile,Authors,autoload,AuxiliaryFiles,backtrace,Bag,Bareiss,baseFilename,BaseFunction,baseName,baseRing,baseRings,BaseRow,BasicList,basis,BasisElementLimit,Bayer,BeforePrint,beginDocumentation,BeginningMacaulay2,Benchmark,benchmark,Bertini,BesselJ,BesselY,betti,BettiCharacters,BettiTally,between,BGG,BIBasis,Binary,BinaryOperation,Binomial,binomial,BinomialEdgeIdeals,Binomials,BKZ,BlockMatrix,BLOCKQUOTE,BODY,Body,BoijSoederberg,BOLD,Book3264Examples,Boolean,BooleanGB,borel,Boxes,BR,break,Browse,Bruns,cache,CacheExampleOutput,CacheFunction,CacheTable,cacheValue,CallLimit,cancelTask,capture,catch,Caveat,CC,CDATA,ceiling,Center,centerString,Certification,ChainComplex,chainComplex,ChainComplexExtras,ChainComplexMap,ChainComplexOperations,ChangeMatrix,char,CharacteristicClasses,characters,charAnalyzer,check,CheckDocumentation,chi,Chordal,class,Classic,clean,clearAll,clearEcho,clearOutput,close,closeIn,closeOut,ClosestFit,CODE,code,codim,CodimensionLimit,coefficient,CoefficientRing,coefficientRing,coefficients,Cofactor,CohenEngine,CohenTopLevel,CoherentSheaf,CohomCalg,cohomology,coimage,CoincidentRootLoci,coker,cokernel,collectGarbage,columnAdd,columnate,columnMult,columnPermute,columnRankProfile,columnSwap,combine,Command,commandInterpreter,commandLine,COMMENT,commonest,commonRing,comodule,CompactMatrix,compactMatrixForm,CompiledFunction,CompiledFunctionBody,CompiledFunctionClosure,Complement,complement,complete,CompleteIntersection,CompleteIntersectionResolutions,Complexes,ComplexField,components,compose,compositions,compress,concatenate,conductor,ConductorElement,cone,Configuration,ConformalBlocks,conjugate,connectionCount,Consequences,Constant,Constants,constParser,content,continue,contract,Contributors,ConvexInterface,conwayPolynomial,ConwayPolynomials,copy,copyDirectory,copyFile,copyright,Core,CorrespondenceScrolls,cos,cosh,cot,CotangentSchubert,cotangentSheaf,coth,cover,coverMap,cpuTime,createTask,Cremona,csc,csch,current,currentColumnNumber,currentDirectory,currentFileDirectory,currentFileName,currentLayout,currentLineNumber,currentPackage,currentString,currentTime,Cyclotomic,Database,Date,DD,dd,deadParser,debug,debugError,DebuggingMode,debuggingMode,debugLevel,DecomposableSparseSystems,Decompose,decompose,deepSplice,Default,default,defaultPrecision,Degree,degree,degreeLength,DegreeLift,DegreeLimit,DegreeMap,DegreeOrder,DegreeRank,Degrees,degrees,degreesMonoid,degreesRing,delete,demark,denominator,Dense,Density,Depth,depth,Descending,Descent,Describe,describe,Description,det,determinant,DeterminantalRepresentations,DGAlgebras,diagonalMatrix,diameter,Dictionary,dictionary,dictionaryPath,diff,DiffAlg,difference,dim,directSum,disassemble,discriminant,dismiss,Dispatch,distinguished,DIV,Divide,divideByVariable,DivideConquer,DividedPowers,Divisor,DL,Dmodules,do,doc,docExample,docTemplate,document,DocumentTag,Down,drop,DT,dual,eagonNorthcott,EagonResolution,echoOff,echoOn,EdgeIdeals,edit,EigenSolver,eigenvalues,eigenvectors,eint,EisenbudHunekeVasconcelos,elapsedTime,elapsedTiming,elements,Eliminate,eliminate,Elimination,EliminationMatrices,EllipticCurves,EllipticIntegrals,else,EM,Email,End,end,endl,endPackage,Engine,engineDebugLevel,EngineRing,EngineTests,entries,EnumerationCurves,environment,Equation,EquivariantGB,erase,erf,erfc,error,errorDepth,euler,EulerConstant,eulers,even,EXAMPLE,ExampleFiles,ExampleItem,examples,ExampleSystems,Exclude,exec,exit,exp,expectedReesIdeal,expm1,exponents,export,exportFrom,exportMutable,Expression,expression,Ext,extend,ExteriorIdeals,ExteriorModules,exteriorPower,Factor,factor,false,Fano,FastMinors,FastNonminimal,FGLM,File,fileDictionaries,fileExecutable,fileExists,fileExitHooks,fileLength,fileMode,FileName,FilePosition,fileReadable,fileTime,fileWritable,fillMatrix,findFiles,findHeft,FindOne,findProgram,findSynonyms,FiniteFittingIdeals,First,first,firstkey,FirstPackage,fittingIdeal,flagLookup,FlatMonoid,flatten,flattenRing,Flexible,flip,floor,flush,fold,FollowLinks,for,forceGB,fork,FormalGroupLaws,Format,format,formation,FourierMotzkin,FourTiTwo,fpLLL,frac,fraction,FractionField,frames,FrobeniusThresholds,from,fromDividedPowers,fromDual,Function,FunctionApplication,FunctionBody,functionBody,FunctionClosure,FunctionFieldDesingularization,fusePairs,futureParser,GaloisField,Gamma,gb,GBDegrees,gbRemove,gbSnapshot,gbTrace,gcd,gcdCoefficients,gcdLLL,GCstats,genera,GeneralOrderedMonoid,GenerateAssertions,generateAssertions,generator,generators,Generic,GenericInitialIdeal,genericMatrix,genericSkewMatrix,genericSymmetricMatrix,gens,genus,get,getc,getChangeMatrix,getenv,getGlobalSymbol,getNetFile,getNonUnit,getPrimeWithRootOfUnity,getSymbol,getWWW,GF,gfanInterface,Givens,GKMVarieties,GLex,Global,global,globalAssign,globalAssignFunction,GlobalAssignHook,globalAssignment,globalAssignmentHooks,GlobalDictionary,GlobalHookStore,globalReleaseFunction,GlobalReleaseHook,Gorenstein,GradedLieAlgebras,GradedModule,gradedModule,GradedModuleMap,gradedModuleMap,gramm,GraphicalModels,GraphicalModelsMLE,Graphics,graphIdeal,graphRing,Graphs,Grassmannian,GRevLex,GroebnerBasis,groebnerBasis,GroebnerBasisOptions,GroebnerStrata,GroebnerWalk,groupID,GroupLex,GroupRevLex,GTZ,Hadamard,handleInterrupts,HardDegreeLimit,hash,HashTable,hashTable,HEAD,HEADER1,HEADER2,HEADER3,HEADER4,HEADER5,HEADER6,HeaderType,Heading,Headline,Heft,heft,Height,height,help,Hermite,hermite,Hermitian,HH,hh,HigherCIOperators,HighestWeights,Hilbert,hilbertFunction,hilbertPolynomial,hilbertSeries,HodgeIntegrals,hold,Holder,Hom,homeDirectory,HomePage,Homogeneous,Homogeneous2,homogenize,homology,homomorphism,HomotopyLieAlgebra,hooks,horizontalJoin,HorizontalSpace,HR,HREF,HTML,html,httpHeaders,Hybrid,HyperplaneArrangements,Hypertext,hypertext,HypertextContainer,HypertextParagraph,icFracP,icFractions,icMap,icPIdeal,id,Ideal,ideal,idealizer,identity,if,IgnoreExampleErrors,ii,image,imaginaryPart,IMG,ImmutableType,importFrom,in,incomparable,Increment,independentSets,indeterminate,IndeterminateNumber,Index,index,indexComponents,IndexedVariable,IndexedVariableTable,indices,inducedMap,inducesWellDefinedMap,InexactField,InexactFieldFamily,InexactNumber,InfiniteNumber,infinity,info,InfoDirSection,infoHelp,Inhomogeneous,input,Inputs,insert,installAssignmentMethod,installedPackages,installHilbertFunction,installMethod,installMinprimes,installPackage,InstallPrefix,instance,instances,IntegralClosure,integralClosure,integrate,IntermediateMarkUpType,interpreterDepth,intersect,intersectInP,Intersection,intersection,interval,InvariantRing,inverse,InverseMethod,inversePermutation,Inverses,inverseSystem,InverseSystems,Invertible,InvolutiveBases,irreducibleCharacteristicSeries,irreducibleDecomposition,isAffineRing,isANumber,isBorel,isCanceled,isCommutative,isConstant,isDirectory,isDirectSum,isEmpty,isField,isFinite,isFinitePrimeField,isFreeModule,isGlobalSymbol,isHomogeneous,isIdeal,isInfinite,isInjective,isInputFile,isIsomorphism,isLinearType,isListener,isLLL,isMember,isModule,isMonomialIdeal,isNormal,isOpen,isOutputFile,isPolynomialRing,isPrimary,isPrime,isPrimitive,isPseudoprime,isQuotientModule,isQuotientOf,isQuotientRing,isReady,isReal,isReduction,isRegularFile,isRing,isSkewCommutative,isSorted,isSquareFree,isStandardGradedPolynomialRing,isSubmodule,isSubquotient,isSubset,isSupportedInZeroLocus,isSurjective,isTable,isUnit,isWellDefined,isWeylAlgebra,ITALIC,Iterate,Jacobian,jacobian,jacobianDual,Jets,Join,join,Jupyter,K3Carpets,K3Surfaces,Keep,KeepFiles,KeepZeroes,ker,kernel,kernelLLL,kernelOfLocalization,Key,keys,Keyword,Keywords,kill,koszul,Kronecker,KustinMiller,LABEL,last,lastMatch,LATER,LatticePolytopes,Layout,lcm,leadCoefficient,leadComponent,leadMonomial,leadTerm,Left,left,length,LengthLimit,letterParser,Lex,LexIdeals,LI,Licenses,LieTypes,lift,liftable,Limit,limitFiles,limitProcesses,Linear,LinearAlgebra,LinearTruncations,lineNumber,lines,LINK,linkFile,List,list,listForm,listLocalSymbols,listSymbols,listUserSymbols,LITERAL,LLL,LLLBases,lngamma,load,loadDepth,LoadDocumentation,loadedFiles,loadedPackages,loadPackage,Local,local,localDictionaries,LocalDictionary,localize,LocalRings,locate,log,log1p,LongPolynomial,lookup,lookupCount,LowerBound,LUdecomposition,M0nbar,M2CODE,Macaulay2Doc,makeDirectory,MakeDocumentation,makeDocumentTag,MakeHTML,MakeInfo,MakeLinks,makePackageIndex,MakePDF,makeS2,Manipulator,map,MapExpression,MapleInterface,markedGB,Markov,MarkUpType,match,mathML,Matrix,matrix,MatrixExpression,Matroids,max,maxAllowableThreads,maxExponent,MaximalRank,maxPosition,MaxReductionCount,MCMApproximations,member,memoize,memoizeClear,memoizeValues,MENU,merge,mergePairs,META,method,MethodFunction,MethodFunctionBinary,MethodFunctionSingle,MethodFunctionWithOptions,methodOptions,methods,midpoint,min,minExponent,mingens,mingle,minimalBetti,MinimalGenerators,MinimalMatrix,minimalPresentation,minimalPresentationMap,minimalPresentationMapInv,MinimalPrimes,minimalPrimes,minimalReduction,Minimize,minimizeFilename,MinimumVersion,minors,minPosition,minPres,minprimes,Minus,minus,Miura,MixedMultiplicity,mkdir,mod,Module,module,ModuleDeformations,modulo,MonodromySolver,Monoid,monoid,MonoidElement,Monomial,MonomialAlgebras,monomialCurveIdeal,MonomialIdeal,monomialIdeal,MonomialIntegerPrograms,MonomialOrbits,MonomialOrder,Monomials,monomials,MonomialSize,monomialSubideal,moveFile,multidegree,multidoc,multigraded,MultigradedBettiTally,MultiGradedRationalMap,multiplicity,MultiplicitySequence,MultiplierIdeals,MultiplierIdealsDim2,MultiprojectiveVarieties,mutable,MutableHashTable,mutableIdentity,MutableList,MutableMatrix,mutableMatrix,NAGtypes,Name,nanosleep,Nauty,NautyGraphs,NCAlgebra,NCLex,needs,needsPackage,Net,net,NetFile,netList,new,newClass,newCoordinateSystem,NewFromMethod,newline,NewMethod,newNetFile,NewOfFromMethod,NewOfMethod,newPackage,newRing,nextkey,nextPrime,nil,NNParser,NoetherianOperators,NoetherNormalization,NonAssociativeProduct,NonminimalComplexes,nonspaceAnalyzer,NoPrint,norm,normalCone,Normaliz,NormalToricVarieties,not,Nothing,notify,notImplemented,NTL,null,nullaryMethods,nullhomotopy,nullParser,nullSpace,Number,number,NumberedVerticalList,numcols,numColumns,numerator,numeric,NumericalAlgebraicGeometry,NumericalCertification,NumericalImplicitization,NumericalLinearAlgebra,NumericalSchubertCalculus,numericInterval,NumericSolutions,numgens,numRows,numrows,odd,oeis,of,ofClass,OL,OldPolyhedra,OldToricVectorBundles,on,OneExpression,OnlineLookup,OO,oo,ooo,oooo,openDatabase,openDatabaseOut,openFiles,openIn,openInOut,openListener,OpenMath,openOut,openOutAppend,operatorAttributes,Option,OptionalComponentsPresent,optionalSignParser,Options,options,OptionTable,optP,or,Order,order,OrderedMonoid,orP,OutputDictionary,Outputs,override,pack,Package,package,PackageCitations,PackageDictionary,PackageExports,PackageImports,PackageTemplate,packageTemplate,pad,pager,PairLimit,pairs,PairsRemaining,PARA,Parametrization,parent,Parenthesize,Parser,Parsing,part,Partition,partition,partitions,parts,path,pdim,peek,PencilsOfQuadrics,Permanents,permanents,permutations,pfaffians,PHCpack,PhylogeneticTrees,pi,PieriMaps,pivots,PlaneCurveSingularities,plus,poincare,poincareN,Points,polarize,poly,Polyhedra,Polymake,PolynomialRing,Posets,Position,position,positions,PositivityToricBundles,POSIX,Postfix,Power,power,powermod,PRE,Precision,precision,Prefix,prefixDirectory,prefixPath,preimage,prepend,presentation,pretty,primaryComponent,PrimaryDecomposition,primaryDecomposition,PrimaryTag,PrimitiveElement,Print,print,printerr,printingAccuracy,printingLeadLimit,printingPrecision,printingSeparator,printingTimeLimit,printingTrailLimit,printString,printWidth,processID,Product,product,ProductOrder,profile,profileSummary,Program,programPaths,ProgramRun,Proj,Projective,ProjectiveHilbertPolynomial,projectiveHilbertPolynomial,ProjectiveVariety,promote,protect,Prune,prune,PruneComplex,pruningMap,Pseudocode,pseudocode,pseudoRemainder,Pullback,PushForward,pushForward,Python,QQ,QQParser,QRDecomposition,QthPower,Quasidegrees,QuaternaryQuartics,QuillenSuslin,quit,Quotient,quotient,quotientRemainder,QuotientRing,Radical,radical,RadicalCodim1,radicalContainment,RaiseError,random,RandomCanonicalCurves,RandomComplexes,RandomCurves,RandomCurvesOverVerySmallFiniteFields,RandomGenus14Curves,RandomIdeals,randomKRationalPoint,RandomMonomialIdeals,randomMutableMatrix,RandomObjects,RandomPlaneCurves,RandomPoints,RandomSpaceCurves,Range,rank,RationalMaps,RationalPoints,RationalPoints2,ReactionNetworks,read,readDirectory,readlink,readPackage,RealField,RealFP,realPart,realpath,RealQP,RealQP1,RealRoots,RealRR,RealXD,recursionDepth,recursionLimit,Reduce,reducedRowEchelonForm,reduceHilbert,reductionNumber,ReesAlgebra,reesAlgebra,reesAlgebraIdeal,reesIdeal,References,ReflexivePolytopesDB,regex,regexQuote,registerFinalizer,regSeqInIdeal,Regularity,regularity,relations,RelativeCanonicalResolution,relativizeFilename,Reload,remainder,RemakeAllDocumentation,remove,removeDirectory,removeFile,removeLowestDimension,reorganize,replace,RerunExamples,res,reshape,ResidualIntersections,ResLengthThree,Resolution,resolution,ResolutionsOfStanleyReisnerRings,restart,Result,resultant,Resultants,return,returnCode,Reverse,reverse,RevLex,Right,right,Ring,ring,RingElement,RingFamily,ringFromFractions,RingMap,rootPath,roots,rootURI,rotate,round,rowAdd,RowExpression,rowMult,rowPermute,rowRankProfile,rowSwap,RR,RRi,rsort,run,RunDirectory,RunExamples,RunExternalM2,runHooks,runLengthEncode,runProgram,same,saturate,Saturation,scan,scanKeys,scanLines,scanPairs,scanValues,schedule,schreyerOrder,Schubert,Schubert2,SchurComplexes,SchurFunctors,SchurRings,SCRIPT,scriptCommandLine,ScriptedFunctor,SCSCP,searchPath,sec,sech,SectionRing,SeeAlso,seeParsing,SegreClasses,select,selectInSubring,selectVariables,SelfInitializingType,SemidefiniteProgramming,Seminormalization,separate,SeparateExec,separateRegexp,Sequence,sequence,Serialization,serialNumber,Set,set,setEcho,setGroupID,setIOExclusive,setIOSynchronized,setIOUnSynchronized,setRandomSeed,setup,setupEmacs,sheaf,SheafExpression,sheafExt,sheafHom,SheafOfRings,shield,ShimoyamaYokoyama,short,show,showClassStructure,showHtml,showStructure,showTex,showUserStructure,SimpleDoc,simpleDocFrob,SimplicialComplexes,SimplicialDecomposability,SimplicialPosets,SimplifyFractions,sin,singularLocus,sinh,size,size2,SizeLimit,SkewCommutative,SlackIdeals,sleep,SLnEquivariantMatrices,SLPexpressions,SMALL,smithNormalForm,solve,someTerms,Sort,sort,sortColumns,SortStrategy,source,SourceCode,SourceRing,SPACE,SpaceCurves,SPAN,span,SparseMonomialVectorExpression,SparseResultants,SparseVectorExpression,Spec,SpechtModule,SpecialFanoFourfolds,specialFiber,specialFiberIdeal,SpectralSequences,splice,splitWWW,sqrt,SRdeformations,stack,stacksProject,Standard,standardForm,standardPairs,StartWithOneMinor,stashValue,StatePolytope,StatGraphs,status,stderr,stdio,step,StopBeforeComputation,stopIfError,StopWithMinimalGenerators,Strategy,String,STRONG,StronglyStableIdeals,STYLE,Style,style,SUB,sub,SubalgebraBases,sublists,submatrix,submatrixByDegrees,Subnodes,subquotient,SubringLimit,Subscript,subscript,SUBSECTION,subsets,substitute,substring,subtable,Sugarless,Sum,sum,SumOfTwists,SumsOfSquares,SUP,super,SuperLinearAlgebra,Superscript,superscript,support,SVD,SVDComplexes,switch,SwitchingFields,sylvesterMatrix,Symbol,symbol,SymbolBody,symbolBody,SymbolicPowers,symlinkDirectory,symlinkFile,symmetricAlgebra,symmetricAlgebraIdeal,symmetricKernel,SymmetricPolynomials,symmetricPower,synonym,SYNOPSIS,syz,Syzygies,SyzygyLimit,SyzygyMatrix,SyzygyRows,syzygyScheme,TABLE,Table,table,take,Tally,tally,tan,TangentCone,tangentCone,tangentSheaf,tanh,target,Task,taskResult,TateOnProducts,TD,temporaryFileName,tensor,tensorAssociativity,TensorComplexes,terminalParser,terms,TEST,Test,testExample,testHunekeQuestion,TestIdeals,TestInput,tests,TEX,tex,TeXmacs,texMath,Text,TH,then,Thing,ThinSincereQuivers,ThreadedGB,threadVariable,Threshold,throw,Time,time,times,timing,TITLE,TO,to,TO2,toAbsolutePath,toCC,toDividedPowers,toDual,toExternalString,toField,TOH,toList,toLower,top,top,topCoefficients,Topcom,topComponents,topLevelMode,Tor,TorAlgebra,Toric,ToricInvariants,ToricTopology,ToricVectorBundles,toRR,toRRi,toSequence,toString,TotalPairs,toUpper,TR,trace,transpose,TriangularSets,Tries,Trim,trim,Triplets,Tropical,true,Truncate,truncate,truncateOutput,Truncations,try,TSpreadIdeals,TT,tutorial,Type,TypicalValue,typicalValues,UL,ultimate,unbag,uncurry,Undo,undocumented,uniform,uninstallAllPackages,uninstallPackage,Unique,unique,Units,Unmixed,unsequence,unstack,Up,UpdateOnly,UpperTriangular,URL,urlEncode,Usage,use,UseCachedExampleOutput,UseHilbertFunction,UserMode,userSymbols,UseSyzygies,utf8,utf8check,validate,value,values,Variable,VariableBaseName,Variables,Variety,variety,vars,Vasconcelos,Vector,vector,VectorExpression,VectorFields,VectorGraphics,Verbose,Verbosity,Verify,VersalDeformations,versalEmbedding,Version,version,VerticalList,VerticalSpace,viewHelp,VirtualResolutions,VirtualTally,VisibleList,Visualize,wait,WebApp,wedgeProduct,weightRange,Weights,WeylAlgebra,WeylGroups,when,whichGm,while,width,wikipedia,Wrap,wrap,WrapperType,XML,xor,youngest,zero,ZeroExpression,zeta,ZZ,ZZParser}
}
\newtheorem{theorem}{Theorem}[section]
\newtheorem{lemma}[theorem]{Lemma}
\newtheorem{proposition}[theorem]{Proposition}
\theoremstyle{definition}
\newtheorem{definition}[theorem]{Definition}
\newtheorem{notation}[theorem]{Notation}
\theoremstyle{remark}
\newtheorem{remark}[theorem]{Remark}
\newtheorem{chunk}[theorem]{}
\numberwithin{equation}{section}
\newcommand{\kk}{\Bbbk}
\newcommand{\del}{\partial}
\newcommand{\ds}{\displaystyle}
\newcommand{\xra}{\xrightarrow}
\newcommand{\Tor}{\operatorname{Tor}}
\newcommand{\q}{\mathbf{q}}
\newcommand{\m}{\mathfrak{m}}
\newcommand{\pf}[2]{\mathrm{pf}_{\overline{#1}}(#2)}
\newcommand{\ee}{\mathsf{e}}
\newcommand{\ww}{\mathsf{w}}
\newcommand{\ff}{\mathsf{f}}
\renewcommand{\gg}{\mathsf{g}}
\newcommand{\vv}{\mathsf{v}}
\newcommand{\ov}{\overline}
\newcommand{\rank}{\mathrm{rank}}
\DeclareSymbolFont{largesymbolsstix}{LS2}{stixex}{m}{n}
\DeclareMathDelimiter{\lbrbrak}{\mathopen}{largesymbolsstix}{"EE}{largesymbolsstix}{"14}
\DeclareMathDelimiter{\rbrbrak}{\mathclose}{largesymbolsstix}{"EF}{largesymbolsstix}{"15}
\crefname{diagram}{diagram}{diagrams}
\crefname{diagram}{Diagram}{Diagrams}
\newcommand\mc[1]{\multicolumn{1}{|l}{#1}} 
\begin{document}

\title[Trimming five generated Gorenstein ideals]{Trimming five generated Gorenstein ideals}

\author[L.~Ferraro]{Luigi Ferraro}

\author[W. F. Moore]{W.\ Frank Moore}

\keywords{Gorenstein ring, pfaffian, trimming, Tor algebra, DG algebra, free resolution}
\subjclass[2020]{13C05,13D02,13D07,13H10}

\begin{abstract}
Let $(R,\mathfrak{m},\Bbbk)$ be a regular local ring of dimension 3. Let $I$ be a Gorenstein ideal of $R$ of grade 3. It follows from a result of Buchsbaum and Eisenbud that there is a skew-symmetric matrix of odd size such that $I$ is generated by the sub-maximal pfaffians of this matrix. Let $J$ be the ideal obtained by multiplying some of the pfaffian generators of $I$ by $\mathfrak{m}$; we say that $J$ is a trimming of $I$. In a previous work, the first author and A. Hardesty constructed an explicit free resolution of $R/J$ and computed a DG algebra structure on this resolution.  They utilized these products to analyze the Tor algebra of such trimmed ideals. Missing from their result was the case where $I$ is five generated. In this paper we address this case.
\end{abstract}
\maketitle
\section{Introduction}

Let $(R,\m,\kk)$ denote a regular local ring, and let $I$ be a perfect ideal of grade $3$. In \cite{BuchEisen}, Buchsbaum and Eisenbud proved that a minimal free resolution $F_\bullet$ of $R/I$ possesses a differential graded (DG) algebra structure. This DG algebra induces a graded algebra structure on $\mathrm{H}(F_\bullet\otimes_R\kk)$; this graded algebra is denoted by $\Tor^R(R/I,\kk)$ and refered to as the Tor algebra of $I$. By results of Weyman \cite{Weyman} and of Avramov, Kustin, and Miller \cite{AKM}, this graded algebra structure does not depend on the DG algebra structure on $F_\bullet$ and it falls into one of five distinct classes $\mathbf{C,T,B,G,H}$. 

Of particular interest are ideals whose Tor algebra is of class $\mathbf{G}$. Gorenstein ideals of grade 3 that are not complete intersections belong to this class. 
Avramov conjectured in \cite{Avramov2012} that every ideal whose Tor algebra belongs to class $\mathbf{G}$ is Gorenstein. However, Christensen and Veliche provided a counterexample to this conjecture in \cite{CVExamples}. An infinite family of counterexamples was later found by Christensen, Veliche and Weyman in \cite{trimming}. The counterexamples in \cite{trimming} were constructed by a process referred to as ``trimming", which involves replacing a generator $g$ of the ideal with $\m g$.

In \cite{Alexis}, the first author and Hardesty studied the class of the Tor algebra of trimmings of grade 3 Gorenstein ideals in three-dimensional regular local rings, generalizing the main result of Christensen, Veliche and Weyman \cite[Theorem 2.4]{trimming}. Missing from their main result \cite[Theorem 5.8]{Alexis} is a study of trimmings of five generated grade 3 Gorenstein ideals who do not trim to class $\mathbf{G}$. In this paper we address this final missing case. More precisely, we prove that if $I$ is a five-generated grade 3 Gorenstein ideal in a three-dimensional regular local ring, and $J$ is the ideal obtained by trimming the first $t$ generators of $I$, if $J$ is not of class $\mathbf{G}$, then the following tables hold:

\begin{center}
\begin{tabular} {|c|c|} \hline
\mc{\quad\quad\quad$t=3$}&\\\hline
$\mathrm{Class}$ & $\mu(J)$ \\\hline
$\mathbf{B}$&$\mathrm{8}$\\\hline
$\mathbf{H}\mathrm{(1,1)}$&$\mathrm{7}$\\\hline
$\mathbf{H}\mathrm{(1,0)}$&$\mathrm{6}$\\\hline
\end{tabular}\quad\quad\quad
\begin{tabular} {|c|c|} \hline
\mc{\quad\quad\quad$t=2$}&\\\hline
$\mathrm{Class}$ & $\mu(J)$ \\\hline
$\mathbf{B}$&$\mathrm{6}$\\\hline
$\mathbf{H}\mathrm{(2,1)}$&$\mathrm{5}$\\\hline
$\mathbf{T}$&$\mathrm{4}$\\\hline
\end{tabular}
\end{center}
see \Cref{thm:t=3} and \Cref{thm:t=2} respectively; where $\mu(J)$ denotes the minimal number of generators of $J$. The case $t=1$ was already addressed in \cite[Theorem 2.4]{trimming} where it is proved that the following table holds:

\begin{center}
\begin{tabular} {|c|c|} \hline
\mc{\quad\quad\quad$t=1$}&\\\hline
$\mathrm{Class}$ & $\mu(J)$ \\\hline
$\mathbf{B}$&$\mathrm{5}$\\\hline
$\mathbf{H}\mathrm{(3,2)}$&$\mathrm{4}$\\\hline
\end{tabular}
\end{center}
while for $t=4,5$ the ideal $J$ is always of class $\mathbf{G}$, see Remark \ref{rmk:t=4,5}.

The paper is organized as follows. In Section 2 we recall background information on perfect ideals of grade 3 and their classification, on DG algebra resolutions of Gorenstein ideals, on pfaffians and we recall the previous results of the first author and Hardesty from \cite{Alexis}. In Section 3 we study the class of the Tor algebra of ideals obtained by trimming three generators of five generated grade 3 Gorenstein ideals. In Section 4 we study the class of the Tor algebra of ideals obtained by trimming two generators of five generated grade 3 Gorenstein ideals. The remaining cases (trimming 1,4 or 5 generators) have already been studied and are addressed in Section 2. In Section 5 we provide an example showing that the classes from the main results of Section 3 and 4 are realized. In the Appendix we provide the Macaulay2 code used for some of the computations in Section 4.


\section{Background and Notation}

In this paper, $(R,\m,\kk)$ represents a regular local ring of dimension 3. The results presented here remain applicable even when $R$ is a three-dimensional standard graded polynomial ring with coefficients in $\kk$.

\begin{chunk}
Let $I$ be a perfect ideal of $R$ of grade 3. We say that $I$ has \emph{format} $(1,m,m+n-1,n)$ if the minimal free resolution $F_\bullet$ of $R/I$ is of the form
\[
F_\bullet:0\longrightarrow R^n\longrightarrow R^{m+n-1}\longrightarrow R^m\longrightarrow R\longrightarrow0.
\]
We fix bases
\begin{equation}\label{basis}
\{e_i\}_{i=1,\ldots,m},\quad\{f_i\}_{i=1,\ldots,m+n-1},\quad\{g_i\}_{i=1,\ldots,n}
\end{equation}
of $F_1,F_2$ and $F_3$ respectively.

As demonstrated in \cite{BuchEisen}, any free resolution of length 3 possesses a DG algebra structure. The DG algebra structure on $F_\bullet$ gives rise to a graded $\kk$-algebra structure on $\mathrm{H}(F_\bullet\otimes_R\kk)$, denoted as $\Tor^R(R/I,\kk)$.

As established in \cite{AKM}, while the DG algebra structure of $F_\bullet$ may not be unique, the algebra structure on $\Tor^R(R/I,\kk)$ is. Furthermore, the bases in \eqref{basis} can be selected such that the induced $\kk$-bases in $\Tor^R(R/I,\kk)$, denoted as
\[
\{\ee_i\}_{i=1,\ldots,m},\quad\{\ff_i\}_{i=1,\ldots,m+n-1},\quad\{\gg_i\}_{i=1,\ldots,n},
\]
exhibit products aligned with one of the multiplicative structures detailed in the product table below:
\begin{center}
\begin{tabular}{rlrl}
    $\mathbf{C}(3)$ & $\ee_1 \ee_2 = \ff_3$, $\ee_2 \ee_3 = \ff_1$, $\ee_3 \ee_1 = \ff_2$ & $\ee_i \ff_i = \gg_1$ & for $1\leq i\leq 3$\\
    $\mathbf{T}$ & $\ee_1 \ee_2 = \ff_3$, $\ee_2 \ee_3 = \ff_1$, $\ee_3 \ee_1 = \ff_2$\\
    $\mathbf{B}$ & $\ee_1 \ee_2 = \ff_3$ & $\ee_i \ff_i = \gg_1$ & for $1\leq i \leq 2$\\
    $\mathbf{G}(r)$ &  & $\ee_i \ff_i = \gg_1$ & for $1\leq i\leq r$\\
    $\mathbf{H}(p,q)$ & $\ee_i \ee_{p+1} = \ff_i$ for $1\leq i\leq p$ & $\ee_{p+1}\ff_{p+j} = \gg_j$ & for $1\leq j\leq q$,
\end{tabular}
\end{center}
with $r,p,q$ nonnegative integers and $r\geq2$. The products not listed are either zero or can be deduced from the ones listed by graded-commutativity. Depending on the product structure of $\Tor^R(R/I,\kk)$, we say that the ideal $I$ belongs to one of the following classes: $\mathbf{C}(3)$, $\mathbf{T}$, $\mathbf{B}$, $\mathbf{G}(r)$, and $\mathbf{H}(p,q)$.
\end{chunk}

\begin{notation}
Let $T=(T_{i,j})$ be a $m\times m$ skew-symmetric matrix with entries in $R$ and with zeros on the diagonal. By $\pf{j_1,\ldots,j_n}{T}$ we denote the pfaffian of the submatrix of $T$ obtained by removing the columns and rows in positions $j_1,\ldots, j_n$. See \cite[2.2]{Alexis} for more details.
\end{notation}

\begin{chunk}
We recall that the \emph{unit step function}, also known as the \emph{Heaviside step function}, is the function $\theta:\mathbb{R}\backslash\{0\}\rightarrow\{0,1\}$ defined as
\[
\theta(x)=\begin{cases}
0\quad\mathrm{if}\;x<0\\
1\quad\mathrm{if}\;x>0.
\end{cases}
\]

Let $m$ be a positive integer and let $i,j,r$ be distinct elements of $\{1,\ldots,m\}$. We introduce the following notation

\begin{equation}\label{eq:Sigma3Theta}
\sigma_{i,j,r} \colonequals (-1)^{i+j+r+1+\theta(r-i)+\theta(r-j)+\theta(j-i)}.
\end{equation}
\end{chunk}

\begin{chunk}\label{ch:GorRes}
Let $I$ be a Gorenstein ideal of $R$ of grade 3 that is not a complete intersection. It was shown in \cite{BuchEisen} that there is a skew-symmetric matrix $T$ of odd size $m$, with zeros on the diagonal and entries in $\m$, such that $I=((-1)^{i+1}\pf{i}{T})_{i=1,\ldots,m}$. Moreover, a minimal free resolution of $R/I$ is given by 
\[
F_\bullet:0\rightarrow R\xra{D_3}R^m\xra{D_2}R^m\xra{D_1}R\rightarrow0,
\]
where $D_1=\begin{pmatrix}\pf{1}{T}&\cdots&(-1)^{i+1}\pf{i}{T}&\cdots&\pf{m}{T}\end{pmatrix}$, $D_2=T$ and $D_3=D_1^*$, the transpose of $D_1$.

A DG algebra structure on this resolution has been found in \cite{small}. Let $\{e_i\}_{i=1,\ldots,m},\{f_i\}_{i=1,\ldots,m}$ and $\{g\}$ be bases in degrees $1,2$ and $3$, respectively, of the resolution $F_\bullet$ with the differentials described by the matrices $D_1,D_2,D_3$ above.
A product on $F_\bullet$ is given by the following formulae
\begin{equation}\label{eq:ProdEE}
e_i\cdot_Fe_j=\sum_{r=1}^m\sigma_{i,j,r}\pf{i,j,r}{T}f_r,\quad\mathrm{for}\;i<j,
\end{equation}
\[
e_i\cdot_F f_j=\delta_{i,j}g,
\]
where $\delta_{i,j}$ is the Kronecker delta.
\end{chunk}

We fix a generating set for the maximal ideal $\m = (z_1,z_2,z_3)$. Let $I$ be a Gorenstein ideal of $R$ of grade 3 that is not a complete intersection and $F_\bullet$ the resolution of $I$ constructed in \Cref{ch:GorRes}. Let $I=(y_1,\ldots,y_m)$ where $y_i\colonequals(-1)^{i+1}\pf{i}{T}$ for $i=1,\ldots,m$ and let $t$ be an integer such that $1\leq t\leq m$, we denote by $J$ the ideal $y_1\m+\cdots y_t\m+(y_{t+1},\ldots,y_m)$; following the terminology set in \cite{trimming} we say that $J$ is obtained from $I$ by trimming the first $t$ generators of $I$. Before we recall the structure of a free resolution of $R/J$, we need to set some notations.

\begin{notation}
Let $(G_\bullet,\delta_\bullet)$ be the Koszul resolution of $k$ over $R$, with
\[
G_1=Ru_1\oplus Ru_2\oplus Ru_3,\quad G_2=Rv_{1,2}\oplus Rv_{1,3}\oplus Rv_{2,3},\quad G_3=Rw,
\]
\[
\delta_1=\begin{pmatrix}z_1&z_2&z_3\end{pmatrix},\quad\delta_2=\begin{pmatrix}-z_2&-z_3&0\\z_1&0&-z_3\\0&z_1&z_2\end{pmatrix},\quad\delta_3=\begin{pmatrix}z_3\\-z_2\\z_1\end{pmatrix}.
\]
We will need $t$ copies of $G_\bullet$, which we will denote by $(G_\bullet^k,\delta^k_\bullet)$ for $k=1,\ldots, t$. We denote the generators of $G_1^k$ by $u_1^k,u_2^k,u_3^k$, and similarly for $G_2^k,G_3^k$. We set $v^k_{\beta,\alpha}=-v^k_{\alpha,\beta}$ for $\alpha<\beta$ and $\alpha,\beta\in\{1,2,3\}$.
\end{notation}

\begin{notation}
Let $c_{i,j,l}$, for $i,j=1,\ldots,m$ and $l=1,2,3$, be elements of $R$ satisfying the following equality 
\begin{equation}\label{eq:Tji}
T_{j,i} = \sum_{l=1}^3 c_{i,j,l}z_l.
\end{equation}
\end{notation}

\begin{remark}
It follows from \cite[Theorem 3.1]{Alexis} that a free resolution of $R/J$, which we denote by $C_\bullet$, is of the form

\begin{center}

\begin{tikzpicture}[baseline=(current  bounding  box.center)]
 \matrix (m) [matrix of math nodes,row sep=3em,column sep=4em,minimum width=2em] {
0&F_3\oplus\left(\ds\bigoplus_{k=1}^tG_3^k\right)&F_2\oplus(\oplus_{k=1}^tG_2^k)&F_1^\prime\oplus\left(\ds\bigoplus_{k=1}^tG_1^k\right)&R.\\};
\path[->] (m-1-1) edge (m-1-2);
\path[->] (m-1-2) edge  node[above]{$\partial_3$} (m-1-3);
\path[->] (m-1-3) edge  node[above]{$\partial_2$} (m-1-4);
\path[->] (m-1-4) edge  node[above]{$\partial_1$} (m-1-5);
\end{tikzpicture}
\end{center}
The only possible units in the differentials of this resolution may appear in the bottom left corner of $\del_2$, which we denote by $-Q$. We point out that in \cite{Alexis} the matrix $Q$ was denoted by $Q_1$. Therefore, we only recall the construction of this map. The map $-Q: F_2\rightarrow\bigoplus_{k=1}^tG_1^k$ is defined by
\[
Q=\begin{pmatrix}q^1\\\vdots\\q^t\end{pmatrix},
\]
where the maps $q^k:F_2\rightarrow G_1^k$ are defined by
\[
q^k(f_i) = \sum_{l=1}^3 c_{i,k,l}u_l^k.
\]
We will denote $Q\otimes_R\kk$ by $\ov{Q}$ and $C_\bullet\otimes_R\kk$ by $\overline{C_\bullet}$.
\end{remark}

\begin{notation}
The basis elements $e_1,\ldots, e_m$ of $C_\bullet$ induce elements in $\Tor^R(R/J,\kk)$ that we denote by $\ee_1,\ldots, \ee_m$. A similar notation will be used for the remaining basis elements of $C_\bullet$.
\end{notation}

\begin{notation}\label{not:tripled}
Let $i,j,k$ be distinct elements of $[5]$, and let $\{r,h\}=[5]\backslash\{i,j,k\}$. Let $\alpha,\beta\in[3]$ with $\alpha<\beta$. Then,
\[
d_{\alpha,\beta}^{k,i,j}\colonequals\sigma_{i,j,r}\sigma_{i,j,r,h,k}\begin{vmatrix}c_{h,k,\alpha}& c_{r,k,\alpha}\\ c_{h,k,\beta} & c_{r,k,\beta}\end{vmatrix},
\]
where
\begin{align}\label{eq:Sigma5Theta}
    \sigma_{i,j,r,h,k} &= (-1)^{h+k+1+\theta(k-i)+\theta(k-j)+\theta(k-r)+\theta(k-h)+\theta(h-i)+\theta(h-j)+\theta(h-r)}.
\end{align}
We point out that the quantity defining $d_{\alpha,\beta}^{k,i,j}$ does not depend on the order of $r$ and $h$.
\end{notation}

\begin{remark}\label{rmk:DG}
A DG algebra structure on the resolution $C_\bullet$ was constructed in \cite[Theorem A.1]{Alexis}. We recall the products on this resolution whose coefficients may be units. We also make use of the proofs of  \cite[Lemma 5.4 and Proposition 5.7]{Alexis} to simplify the products in the 5 generated case. We keep the labels used in \cite[Theorem A.1]{Alexis}.

\noindent \underline{a. $F_1'\otimes F_1' \rightarrow F_2 \oplus (\oplus_{k=1}^t G_2^k)$}\\
\begin{equation*}
e_i\cdot e_j\colonequals e_i\cdot_Fe_j+\sum_{k=1}^t d^{k,i,j}_{1,2}v^k_{1,2}+d^{k,i,j}_{1,3}v^k_{1,3} + d^{k,i,j}_{2,3}v^k_{2,3},
\end{equation*}
for $t+1\leq i,j\leq 5$.

\vspace{4mm}
\noindent \underline{e. $F_1' \otimes F_2 \rightarrow F_3 \oplus \left( \oplus_{k=1}^t G_3^k \right)$}\\
\begin{equation*}
e_i \cdot f_j\colonequals \begin{cases}
e_i\cdot_Ff_j, & t+1\leq j\leq 5 \\
\ds\sigma_{i,r,h}\sigma_{i,r,h,s,j}\begin{vmatrix}c_{h,j,1} & c_{s,j,1} & c_{r,j,1} \\
c_{h,j,2} & c_{s,j,2} & c_{r,j,2}\\
c_{h,j,3} & c_{s,j,3} & c_{r,j,3}\end{vmatrix}w^j, & 1\leq j\leq t,\quad\mathrm{where}\;\{r,h,s\}=[5]\backslash\{i,j\}.
\end{cases}
\end{equation*}
for $t+1\leq i \leq 5$.
\end{remark}

\begin{notation}
For the sake of readibility we denote the class $\mathbf{H}(0,0)$ by $\mathbf{G}(0)$ and the class $\mathbf{H}(0,1)$ by $\mathbf{G}(1)$.
\end{notation}

\begin{definition}
Let $t\in[5]$ and let $T$ be a $5\times5$ skew-symmetric matrix with entries in $\m$.
We say that $T$ satisfies the \emph{$\mathbf{G}$-trimming condition for $t$} if for every $i,j,k$ distinct with $t+1\leq i,j\leq5$ and $1\leq k\leq t$, the $2\times 2$ minors of the matrix
\[
\begin{pmatrix}
\overline{c_{h,k,1}} &\overline{c_{r,k,1}}\\
\overline{c_{h,k,2}} &\overline{c_{r,k,2}}\\
\overline{c_{h,k,3}} &\overline{c_{r,k,3}}
\end{pmatrix}
\]
are zero, where $\{h,r\}=[5]\backslash\{k,i,j\}$ and the bar denotes the residue class modulo $\m$.

Let $I$ be a grade 3 Gorenstein ideal presented by the skew-symmetric matrix $T$, if there is no ambiguity we say that $I$ satisfies the $\mathbf{G}$-trimming condition for $t$ if $T$ does.
\end{definition}

\begin{remark}
We point out that $T$ satisfies the $\mathbf{G}$-trimming condition for $t$ if and only if the coefficients of $v_{1,2}^k,v_{1,3}^k,v_{2,3}^k$ in product a. in \Cref{rmk:DG} are zero modulo $\m$ for all $k=1,\ldots, 5$ and all $i,j=t+1,\ldots, 5$.
\end{remark}

\begin{remark}
It was proved in \cite[Theorem 5.8(1)]{Alexis} that a five generated Gorenstein ideal presented by the skew-symmetric matrix $T$ trims to an ideal of class $\mathbf{G}$ if and only if $T$ satisfies the $\mathbf{G}$-trimming condition.
\end{remark}

\begin{notation}
Consider the following projection map
\[
\pi:\bigoplus_{k=1}^tG_1^k\rightarrow\left(\bigoplus_{k=1}^tG_1^k\right)/\mathrm{Im}\;Q|_{f_1,\ldots,f_t},
\]
and denote by $\ov{\pi}$ the map $\pi\otimes_R\kk$. We denote by $p(T,t)$ the following
\[
\rank_\kk(\ov{\pi}\ov{Q}).
\]
We point out that $p(T,t)$ is the number of pivot columns, among the last $5-t$ columns, of the matrix $\ov{Q}$. 
\end{notation}

\begin{remark}\label{rmk:t=4,5}
If $t=5$, then the $\mathbf{G}$-trimming condition is vacuously satisfied. It follows from \cite[Theorem 5.8]{Alexis} that the trimmed ideal is of class $\mathbf{G}(0)$.

If $t=4$, then the $\mathbf{G}$-trimming condition is also vacuously satisfied. It follows from \cite[Theorem 5.8]{Alexis} that the trimmed ideal is of class $\mathbf{G}(1-p(T,4))$.
\end{remark}

\begin{remark}
Let $t=1$ and let $I$ be a Gorenstein ideal not satisfying the $\mathbf{G}$-trimming condition. Let $J$ be the trimmed ideal. It was proved in \cite[Theorem 2.4]{trimming} that the class of $J$ is
\[
\begin{cases}
\mathbf{H}(3,2)\quad\mathrm{if}\;\mu(J)=4,\\
\mathbf{B}\hphantom{(3,2)}\quad\mathrm{if}\;\mu(J)=5.
\end{cases}
\]
It follows from \cite[Theorem 5.8]{Alexis} and the proof of \cite[Corollary 5.11(1)]{Alexis} that if $t=1$, then $\mu(J)=7-p(T,1)$ and $\mathrm{rank}(\ov{Q})=p(T,1)$. Since our main results break down the possible classes of the trimmed ideal based upon the invariant $p(T,t)$ and $\mathrm{rank}(\ov{Q})$, we take this opportunity to restate \cite[Theorem 2.4]{trimming} in terms of these invariants. The class of $J$ is

\begin{equation}\label{eq:t=1}
\begin{cases}
\mathbf{H}(3,2)\quad\mathrm{if}\;p(T,1)=\mathrm{rank}(\ov{Q})=3,\\
\mathbf{B}\hphantom{(3,2)}\quad\mathrm{if}\;p(T,1)=\mathrm{rank}(\ov{Q})=2.
\end{cases}
\end{equation}
We point out that $p(T,1)\leq3$ since $\ov{Q}$ only has 3 rows. Moreover, if the $\mathbf{G}$-trimming condition is not satisfied, then $p(T,1)\geq2$.
\end{remark}
Therefore, the cases we will focus on in this paper are $t=2,3$.

\begin{notation}
Let $M$ be an $n\times n$ matrix with entries in $\kk$. Let $\{i_1,\ldots, i_k\},\{j_1,\ldots, j_l\}\subseteq[n]$ be two sets of indeces. We denote by $M_{\ov{i_1,\ldots, i_k};\ov{j_1,\ldots,j_l}}$, the submatrix of $M$ obtained by removing the rows $i_1,\ldots, i_k$ and the columns $j_1,\ldots, j_l$. We denote by $M_{i_1,\ldots, i_k;j_1,\ldots, j_l}$ the submatrix of $M$ consisting of rows $i_1,\ldots, i_k$ and columns $j_1,\ldots, j_l$.
\end{notation}

\begin{notation}
Let $I$ be a grade 3 perfect ideal. We will be denoting the algebra $\Tor^R(R/I,\kk)$ by $\mathcal{T}^I$. If there is no ambiguity we will simply be denoting it by $\mathcal{T}$.
\end{notation}

\section{Trimming three generators}
In this section $I$ will denote a Gorenstein ideal of grade 3 presented by a skew-symmetric matrix of odd size $T$, and $J$ the ideal obtained by trimming the first three generators of $I$. We will also be assuming that $I$ does not satisfy the $\mathbf{G}$-trimming condition.

\begin{remark}\label{rem:gen3}
Under these assumptions, the only nonzero product of type $(a)$ is the product $e_4e_5$. Therefore the Tor algebra of $J$ satisfies the following equality
\[
\mathrm{rank}_\kk(\mathcal{T}_1^{\hspace{0.02cm}2})=1.
\]
\end{remark}

\begin{lemma}\label{lem:gen3}
The first three columns of $\ov{Q}$ are pivot columns.
\end{lemma}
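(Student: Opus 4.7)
The plan is to show directly that the first three columns $C_1,C_2,C_3$ of $\ov Q$, those indexed by $f_1,f_2,f_3$, are $\kk$-linearly independent; being a pivot column in the left-to-right echelon process is equivalent to this for the initial columns.

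First I would record the symmetry $\ov{c_{i,j,l}}=-\ov{c_{j,i,l}}$ in $\kk$: the skew-symmetry of $T$ gives $\sum_l(c_{i,j,l}+c_{j,i,l})z_l=0$, and since $z_1,z_2,z_3$ is a regular sequence its first syzygies are Koszul, so each $c_{i,j,l}+c_{j,i,l}$ lies in $\m$. In particular $\ov{c_{i,i,l}}=0$, so the column $C_i$ of $\ov Q$ vanishes in the $i$-th Koszul row block (the three rows coming from $G_1^i$) for $i=1,2,3$.

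Next, I would suppose a relation $\alpha C_1+\beta C_2+\gamma C_3=0$ and project onto each of the three row blocks $k=1,2,3$. Using the vanishing just recorded, this produces three identities of vectors in $\kk^3$:
\[
\beta(\ov{c_{2,1,l}})_l+\gamma(\ov{c_{3,1,l}})_l=0,\quad \alpha(\ov{c_{1,2,l}})_l+\gamma(\ov{c_{3,2,l}})_l=0,\quad \alpha(\ov{c_{1,3,l}})_l+\beta(\ov{c_{2,3,l}})_l=0.
\]
The driver is the failure of the $\mathbf{G}$-trimming condition for $t=3$: since the only admissible pair $\{i,j\}\subseteq\{4,5\}$ of distinct indices is $\{4,5\}$, this failure produces some $k\in\{1,2,3\}$ for which the $3\times 2$ matrix $(\ov{c_{h,k,l}}\mid\ov{c_{r,k,l}})_{l=1,2,3}$ has rank $2$, where $\{h,r\}=\{1,2,3\}\setminus\{k\}$. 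The $k$-th displayed equation then immediately forces the two coefficients in $\{\alpha,\beta,\gamma\}$ indexed by $h$ and $r$ to vanish. The remaining two block equations each carry a single surviving unknown whose coefficient vector is, up to sign via $\ov{c_{s,k,l}}=-\ov{c_{k,s,l}}$, one of the two linearly independent columns of that rank-$2$ matrix, hence nonzero; this forces the third unknown to vanish as well.

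The main obstacle is essentially bookkeeping: the argument splits into a three-way symmetric case analysis based on which $k$ witnesses the failure of the $\mathbf{G}$-trimming condition. Each case is settled by the same two-step linear-algebra step that combines the rank-$2$ witness with the skew-symmetry of $T$ modulo $\m$ to transport information between the three row blocks, so I expect no genuine difficulty beyond writing the cases out.
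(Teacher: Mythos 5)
Your proof is correct and follows essentially the same route as the paper's, just spelled out in more detail: the paper's proof invokes the failure of the $\mathbf{G}$-trimming condition to get a nonzero $2\times 2$ minor in one of $\ov{Q}_{1,2,3;2,3}$, $\ov{Q}_{4,5,6;1,3}$, $\ov{Q}_{7,8,9;1,2}$, then appeals tersely to ``the symmetric features of the first three columns'' to conclude they are pivots, whereas you explicitly carry out the linear-independence check via the three-block projection and the case analysis on which $k$ witnesses the failure. Your aside that $\ov{c_{i,j,l}}+\ov{c_{j,i,l}}=0$ and $\ov{c_{i,i,l}}=0$ follow from the Koszul structure of first syzygies of the regular sequence $z_1,z_2,z_3$ is a nice point the paper leaves implicit in its matrix display; otherwise the two arguments are the same.
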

\begin{proof}
The matrix $\ov{Q}$ is of the following form
\[
\begin{pmatrix}
0& \ov{c_{2,1,1}} & \ov{c_{3,1,1}} & \ov{c_{4,1,1}} & \ov{c_{5,1,1}}\\
0& \ov{c_{2,1,2}} & \ov{c_{3,1,2}} & \ov{c_{4,1,2}} & \ov{c_{5,1,2}}\\
0& \ov{c_{2,1,3}} & \ov{c_{3,1,3}} & \ov{c_{4,1,3}} & \ov{c_{5,1,3}}\\\hdashline
-\ov{c_{2,1,1}} &0 & \ov{c_{3,2,1}} & \ov{c_{4,2,1}} & \ov{c_{5,2,1}}\\
-\ov{c_{2,1,2}} &0 & \ov{c_{3,2,2}} & \ov{c_{4,2,2}} & \ov{c_{5,2,2}}\\
-\ov{c_{2,1,3}} &0 & \ov{c_{3,2,3}} & \ov{c_{4,2,3}} & \ov{c_{5,2,3}}\\\hdashline
-\ov{c_{3,1,1}} & -\ov{c_{3,2,1}} & 0 & \ov{c_{4,3,1}} & \ov{c_{5,3,1}}\\
-\ov{c_{3,1,2}} & -\ov{c_{3,2,2}} & 0 & \ov{c_{4,3,2}} & \ov{c_{5,3,2}}\\
-\ov{c_{3,1,3}} & -\ov{c_{3,2,3}} & 0 & \ov{c_{4,3,3}} & \ov{c_{5,3,3}}
\end{pmatrix}
\]
Since $I$ does not satisfy the $\mathbf{G}$-trimming condition, there is a nonzero $2\times2$ minor in one of the following submatrices 
\[
\ov{Q}_{1,2,3;2,3},\quad\ov{Q}_{4,5,6;1,3},\quad\ov{Q}_{7,8,9;1,2}.
\]
Because of the symmetric features of the first three colums of $\ov{Q}$, the presence of a nonzero $2\times2$ minor in one of the three submatrices above forces the first three columns of $\ov{Q}$ to be pivot columns.
\end{proof}

\begin{samepage}
\begin{theorem}\label{thm:t=3}
Let $I$ be a 5 generated grade 3 Gorenstein ideal not satisfying the $\mathbf{G}$-trimming condition. Let $J$ be the ideal obtained by trimming the first three pfaffian generators of $I$. Then the class of $J$ and its format, as functions of $p(T,3)$ or $\mathrm{rank}(\ov{Q})$, are

\begin{center}
\begin{tabular} {|c|c|c|c|} \hline
$p(T,3)$ & \rule{0pt}{2.4ex}  $\mathrm{rank}(\ov{Q})$ & $\mathrm{Class}$ & $\mathrm{Format}$ \\\hline
0&3&$\mathbf{B}$&$\mathrm{(1,8,11,4)}$\\\hline
1&4&$\mathbf{H}\mathrm{(1,1)}$&$\mathrm{(1,7,10,4)}$\\\hline
2&5&$\mathbf{H}\mathrm{(1,0)}$&$\mathrm{(1,6,9,4)}$\\\hline
\end{tabular}
\end{center}
\end{theorem}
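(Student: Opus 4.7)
The plan is to treat the two columns of the table separately: the format column reduces to linear algebra on $\overline{Q}$, while the class column requires extracting the multiplicative structure of $\mathcal{T}\colonequals\mathcal{T}^J$ from the DG algebra on $C_\bullet$ recorded in \Cref{rmk:DG}. For the format, \Cref{lem:gen3} yields that the first three columns of $\overline{Q}$ are pivots, and by definition $p(T,3)$ counts the pivot columns among the last two; hence $\rank(\overline{Q})=3+p(T,3)$. The resolution $C_\bullet$ has ranks $(1,11,14,4)$ in degrees $0$ through $3$, and since the only unit entries in any differential of $C_\bullet$ occur in the block $-Q$ of $\partial_2$, a standard minimalization cancels exactly $\rank(\overline{Q})$ pairs of basis vectors between $F_2$ and $\bigoplus_{k=1}^{3}G_1^k$, producing a minimal resolution of format $(1,\,11-\rank(\overline{Q}),\,14-\rank(\overline{Q}),\,4)$; this matches the claimed format entries in each row.

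Turning to the class, since $J\subseteq\m$ and both $\partial_1,\partial_3$ have entries in $\m$, the normalized complex $\overline{C_\bullet}$ has $\mathcal{T}_1=\overline{C_1}/\mathrm{Im}(\overline{Q})$, $\mathcal{T}_2=\ker(\overline{\partial_2})=\ker(\overline{Q})\oplus\bigoplus_{k=1}^3\overline{G_2^k}$, and $\mathcal{T}_3=\overline{C_3}$ with basis $\overline{g},\overline{w^1},\overline{w^2},\overline{w^3}$. By \Cref{rem:gen3}, $\dim_\kk(\mathcal{T}_1^{\,2})=1$, which rules out $\mathbf{C}(3)$ and $\mathbf{T}$ (where this dimension is $3$) and $\mathbf{G}(r)$ with $r\geq 2$ (where it is $0$), leaving only $\mathbf{B}$ and $\mathbf{H}(1,q)$ with $q\in\{0,1,2\}$. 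To decide among these I will compute (i) $q_0\colonequals\dim_\kk(\mathcal{T}_1\cdot\mathcal{T}_2)$, which equals $q$ for $\mathbf{H}(1,q)$ and $1$ for $\mathbf{B}$, and (ii) the rank of $\rho\colon\mathcal{T}_1\to\Hom_\kk(\mathcal{T}_2,\mathcal{T}_3),\ e\mapsto(f\mapsto ef)$, which equals $1$ in $\mathbf{H}(1,q)$ for $q\geq 1$, equals $0$ in $\mathbf{H}(1,0)$, and equals $2$ in $\mathbf{B}$. The only products with unit coefficients that can land in $\mathcal{T}_3$ come from (e) in \Cref{rmk:DG}: $\overline{e_i\cdot f_j}$ is a specific $3\times 3$ determinant in the $c$-coefficients times $\overline{w^j}$ for $i\in\{4,5\},\,j\in\{1,2,3\}$, and equals $\delta_{i,j}\overline{g}$ for $j\in\{4,5\}$.

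The case analysis then proceeds: for $p(T,3)=2$, $\ker(\overline{Q})=0$, so $\mathcal{T}_2$ has no $F_2$-part, the products from (e) cannot be invoked, and $q_0=0$, giving class $\mathbf{H}(1,0)$. For $p(T,3)=1$, $\ker(\overline{Q})$ is spanned by a single cycle $\tilde f$ supported on the non-pivot column among $f_4,f_5$; the products $\overline{e_4\tilde f}$ and $\overline{e_5\tilde f}$ each produce $\overline{g}$ plus $w$-corrections, and only one $\kk$-linear combination of $\overline{e_4},\overline{e_5}$ acts nontrivially on $\tilde f$, pinning $q_0=1$ and $\rank(\rho)=1$, identifying class $\mathbf{H}(1,1)$. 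For $p(T,3)=0$ both $f_4,f_5$ are non-pivot, so $\ker(\overline{Q})$ is two-dimensional with cycles $\tilde f_4,\tilde f_5$; both $\overline{e_4}$ and $\overline{e_5}$ pair nontrivially and produce proportional elements in $\mathcal{T}_3$, so $q_0=1$ but $\rank(\rho)=2$, yielding class $\mathbf{B}$. The main obstacle is the explicit matching between the cycles in $\ker(\overline{Q})$ (expressed as non-pivot columns modulo linear combinations of pivot columns) and the $3\times 3$ determinants of the $c$-coefficients from product (e): these must line up so that the $\overline g$-components of the products $\overline{e_i\tilde f_j}$ are proportional in $\mathcal{T}_3$ (keeping $q_0=1$ throughout), while $\rank(\rho)$ correctly transitions from $1$ to $2$ as $p(T,3)$ decreases from $1$ to $0$.
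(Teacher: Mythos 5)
Your plan reproduces the paper's argument in outline: the format calculation via $\rank(\ov Q)=3+p(T,3)$ is the same, the reduction to $\mathbf{B}$ versus $\mathbf{H}(1,q)$ via $\rank_\kk(\mathcal T_1^{\,2})=1$ is the same, and your two diagnostics $q_0=\dim_\kk(\mathcal T_1\mathcal T_2)$ and $\rank(\rho)$ are a clean repackaging of what the paper checks by inspecting products directly. However, the plan leaves unresolved exactly the step that carries the weight of the proof, and you say so yourself in the last sentence: you do not show \emph{why} the $\ov g$-components and the $\ww^j$-components of the products $\ov{e_i\cdot f_j}$ line up to give $q_0=1$ and the stated $\rank(\rho)$ in the cases $p(T,3)=0,1$. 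As written, a reader has no way to verify the claims ``only one $\kk$-linear combination of $\ov{e_4},\ov{e_5}$ acts nontrivially on $\tilde f$'' and ``both $\ov{e_4}$ and $\ov{e_5}$ pair nontrivially and produce proportional elements in $\mathcal T_3$.''

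The missing observation — which is the engine of the paper's proof — is that the $3\times 3$ determinants in product (e) are not arbitrary determinants in the $c$'s: for $i\in\{4,5\}$ and $j\in\{1,2,3\}$, the determinant in $\ov{e_i\cdot f_j}$ is (up to sign) the $3\times 3$ minor of $\ov Q$ on rows $3(j-1)+1,\dots,3j$ and on the three columns $[5]\setminus\{i,j\}$, i.e.\ one of
\[
\ov Q_{1,2,3;\,2,3,5},\quad \ov Q_{4,5,6;\,1,3,5},\quad \ov Q_{7,8,9;\,1,2,5},\quad
\ov Q_{1,2,3;\,2,3,4},\quad \ov Q_{4,5,6;\,1,3,4},\quad \ov Q_{7,8,9;\,1,2,4}.
\]
Once this identification is made, your case analysis closes. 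If $p(T,3)=0$, none of columns $4,5$ of $\ov Q$ is a pivot; but each of the six minors above contains column $4$ or column $5$ together with two of columns $1,2,3$, and a nonzero such minor would force column $4$ or $5$ to be a pivot column. Hence all six minors vanish, so $\ee_i\ff_j=0$ for $i\in\{4,5\}$, $j\le 3$, and the only surviving products are $\ee_4\ff_4'=\ee_5\ff_5'=\gg$; this pins $q_0=1$, $\rank(\rho)=2$, class $\mathbf{B}$. If $p(T,3)=1$ with column $4$ the pivot, write column $5$ as $\alpha_5\,\mathrm{col}_1+\beta_5\,\mathrm{col}_2+\gamma_5\,\mathrm{col}_3+\delta_5\,\mathrm{col}_4$ in $\ov Q$; by multilinearity of the determinant one then gets, for each $j\le 3$, $\det\ov Q_{3(j-1)+1,\dots,3j;\,[5]\setminus\{4,j\}}=\delta_5\det\ov Q_{3(j-1)+1,\dots,3j;\,[5]\setminus\{5,j\}}$, which is precisely what makes $(\ee_4-\delta_5\ee_5)\ff_5'=0$ and hence $\rank(\rho)=1$, $q_0=1$, class $\mathbf{H}(1,1)$. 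Without this link between the product-(e) determinants and the pivot structure of $\ov Q$, neither of those two cases can be completed, so the gap is genuine.
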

\end{samepage}
\begin{proof}
It follows from \cite[Theorem 5.8]{Alexis} that the format of $J$ is
\[
(1,11-\mathrm{rank}(\ov{Q}),14-\mathrm{rank}(\ov{Q}),4).
\]
By \Cref{lem:gen3} one has that 
\[
\rank(\ov{Q})=3+p(T,3).
\]
The previous two displays give the format of $J$ in the three cases of the theorem.

The determinants appearing in the products $\ee_i\ff_j$ with $4\leq i\leq5$ and $1\leq j\leq 3$ are the determinants of the following submatrices
\[
\ov{Q}_{1,2,3;2,3,5},\quad\ov{Q}_{4,5,6;1,3,5},\quad\ov{Q}_{7,8,9;1,2,5},
\]
\[
\ov{Q}_{1,2,3;2,3,4},\quad\ov{Q}_{4,5,6;1,3,4},\quad\ov{Q}_{7,8,9;1,2,4}.
\]

we will make use of this fact throughout the proof.

By \Cref{rem:gen3} $\mathrm{rank}_\kk(\mathcal{T}_1^{\hspace{0.02cm}2})=1$. The only classes satisfying this equality are $\mathbf{B}$ and $\mathbf{H}(1,q)$ for $q\geq0$.

Without loss of generality we can assume that $\ov{Q}$ is in its reduced row echelon form, since row operations affect basis elements that do not contribute to the product in $\mathcal{T}$.

\textbf{Case 1}: $p(T,3)=0$.

If one of the six minors above is nonzero, then by applying row operations to $\ov{Q}$ it would follow that one of the last two columns is a pivot column, contradicting the assumption $p(T,3)=0$. Therefore all products of the form $\ee_i\ff_j$ with $4\leq i\leq5$ and $1\leq j\leq3$ are zero in $\ov{C_\bullet}$.

Since only the first three columns of $\ov{Q}$ are pivot columns, one can make a change of basis of the form
\[
\ff_j'=\ff_j+\alpha_j\ff_1+\beta_j\ff_2+\gamma_j\ff_3,\quad j=1,\ldots,5;\alpha_j,\beta_j,\gamma_j\in\kk,
\]
to split off the nonminimal part of $\ov{C_\bullet}$. The only basis elements of this form in $\mathcal{T}_2$ are $\ff_4'$ and $\ff_5'$. To show that the class of $J$ in this case is $\mathbf{B}$ it only remains to notice that
\[
\ee_j\ff_j'=\ee_j\ff_j=\gg\quad\mathrm{for}\;j=4,5,
\]
while the other products of basis elements of degree 1 by basis elements of degree 2 are zero.

\textbf{Case 2}: $p(T,3)=1$.

We assume that the fourth column is a pivot column, the case where the fifth column is a pivot column is similar. As in the previous case, one can make a change of basis of the form

\[
\ff_j'=\ff_j+\alpha_j\ff_1+\beta_j\ff_2+\gamma_j\ff_3+\delta_j\ff_4,\quad j=1,\ldots,5;\alpha_j,\beta_j,\gamma_j,\delta_j\in\kk,
\]
to split off the nonminimal part of $\ov{C_\bullet}$. The only basis element of this form in $\mathcal{T}_2$ is $\ff_5'$. We notice that $\ee_5\ff_5'\neq0$. We set $\ee_4'=\ee_4-\delta_5\ee_5$, and notice that $\ee_4',\ee_5$ and $\ee_4,\ee_5$ are bases of the same vector space. Moreover, $\ee_4'\ff_5'=0$. This shows that the class of $J$ is $\mathbf{H}(1,1)$.

\textbf{Case 3}: $p(T,3)=2$.

In this case every column of $\ov{Q}$ is a pivot column. Therefore, after an appropriate change of basis, the elements $\ff_1',\ldots,\ff_5'$ are split off from the minimal resolution of $R/J$. Since there is no element of $\mathcal{T}_2$ with a nonzero product with an element of $\mathcal{T}_1$, it follows that the class of $J$ must be $\mathbf{H}(1,0)$.
\end{proof}
\section{Trimming two generators}
In this section $I$ will denote a Gorenstein ideal of grade 3, and $J$ the ideal obtained by trimming the first two generators of $I$. We will also be assuming that $I$ does not satisfy the $\mathbf{G}$-trimming condition.

\begin{lemma}\label{lem:gen2}
The first two columns of $\ov{Q}$ are pivot columns.
\end{lemma}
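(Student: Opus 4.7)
The plan is to mirror the argument of \Cref{lem:gen3}, adapted to the simpler setting in which only two pivot columns need to be produced. The first step is to write down the explicit form of $\ov{Q}$, which for $t=2$ is a $6 \times 5$ matrix with rows indexed by pairs $(k,l)$ for $k \in \{1,2\}$ and $l \in \{1,2,3\}$. Using the vanishing of the diagonal of $T$ (so $\ov{c_{k,k,l}}=0$) together with the skew-symmetry identity $c_{1,2,l} = -c_{2,1,l}$, one sees that column 1 of $\ov{Q}$ is supported only on rows $4,5,6$ with entries $-\ov{c_{2,1,l}}$, while column 2 is supported only on rows $1,2,3$ with entries $\ov{c_{2,1,l}}$.

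Because these two columns have disjoint supports, they are linearly independent precisely when the single vector $v := (\ov{c_{2,1,1}}, \ov{c_{2,1,2}}, \ov{c_{2,1,3}}) \in \kk^3$ is nonzero. Thus the lemma reduces to showing $v \neq 0$ under the hypothesis that $I$ does not satisfy the $\mathbf{G}$-trimming condition.

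The remaining step, which is the crux, rests on a combinatorial observation: for every admissible triple $(i,j,k)$ with $\{i,j\} \subseteq \{3,4,5\}$ and $k \in \{1,2\}$, the set $\{h,r\} = [5] \setminus \{k,i,j\}$ automatically contains the unique element $k^\prime \in \{1,2\} \setminus \{k\}$. Consequently, one column of the $3 \times 2$ matrix appearing in that instance of the $\mathbf{G}$-trimming condition equals $(\ov{c_{k^\prime,k,l}})_l$, which is $\pm v$ by skew-symmetry. If any $2 \times 2$ minor of this matrix is nonzero, both of its columns are linearly independent in $\kk^3$, so in particular $v \neq 0$. Since failure of the $\mathbf{G}$-trimming condition produces such a nonzero minor, we obtain $v \neq 0$, and hence columns 1 and 2 of $\ov{Q}$ are pivot columns. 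The main (mild) obstacle is careful bookkeeping with the skew-symmetry sign and the case split $k \in \{1,2\}$; otherwise the argument is a direct simplification of the proof of \Cref{lem:gen3}.
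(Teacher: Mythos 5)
Your proof is correct and follows essentially the same approach as the paper: you write out the explicit form of $\ov{Q}$, observe that failure of the $\mathbf{G}$-trimming condition yields a nonzero $2\times 2$ minor in one of the six relevant $3\times 2$ submatrices, and note that each of these submatrices contains either column $1$ or column $2$ of $\ov{Q}$ as a column. Your reduction to the vector $v=(\ov{c_{2,1,1}},\ov{c_{2,1,2}},\ov{c_{2,1,3}})$ makes the paper's appeal to the ``symmetric features'' of the first two columns a bit more explicit, but it is the same underlying argument.
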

\begin{proof}
The matrix $\ov{Q}$ is of the following form
\[
\begin{pmatrix}
0& \ov{c_{2,1,1}} & \ov{c_{3,1,1}} & \ov{c_{4,1,1}} & \ov{c_{5,1,1}}\\
0& \ov{c_{2,1,2}} & \ov{c_{3,1,2}} & \ov{c_{4,1,2}} & \ov{c_{5,1,2}}\\
0& \ov{c_{2,1,3}} & \ov{c_{3,1,3}} & \ov{c_{4,1,3}} & \ov{c_{5,1,3}}\\\hdashline
-\ov{c_{2,1,1}} &0 & \ov{c_{3,2,1}} & \ov{c_{4,2,1}} & \ov{c_{5,2,1}}\\
-\ov{c_{2,1,2}} &0 & \ov{c_{3,2,2}} & \ov{c_{4,2,2}} & \ov{c_{5,2,2}}\\
-\ov{c_{2,1,3}} &0 & \ov{c_{3,2,3}} & \ov{c_{4,2,3}} & \ov{c_{5,2,3}}
\end{pmatrix}
\]
Since $I$ does not satisfy the $\mathbf{G}$-trimming condition, one of the following six submatrices of $\ov{Q}$ must have a nonzero $2\times2$ minor
\[
\ov{Q}_{1,2,3;2,3},\quad\ov{Q}_{1,2,3;2,4},\quad\ov{Q}_{1,2,3;2,5},
\]
\[
\ov{Q}_{4,5,6;1,3},\quad\ov{Q}_{4,5,6;1,4},\quad\ov{Q}_{4,5,6;1,5}.
\]

Because of the symmetric features of the first two colums of $\ov{Q}$, the presence of a nonzero $2\times2$ minor in one of the six submatrices above forces the first two columns of $\ov{Q}$ to be pivot columns.
\end{proof}

\begin{remark}
It follows from the previous proof that $p(T,2)\geq1$.
\end{remark}

\begin{notation}
We will be denoting by $E$ the following $6\times3$ matrix
\[E\colonequals
\begin{pmatrix}
\ov{d_{1,2}^{1,3,4}} & \ov{d_{1,2}^{1,3,5}} & \ov{d_{1,2}^{1,4,5}}\\[6pt]
\ov{d_{1,3}^{1,3,4}} & \ov{d_{1,3}^{1,3,5}} & \ov{d_{1,3}^{1,4,5}}\\[6pt]
\ov{d_{2,3}^{1,3,4}} & \ov{d_{2,3}^{1,3,5}} & \ov{d_{2,3}^{1,4,5}}\\[6pt]
\ov{d_{1,2}^{2,3,4}} & \ov{d_{1,2}^{2,3,5}} & \ov{d_{1,2}^{2,4,5}}\\[6pt]
\ov{d_{1,3}^{2,3,4}} & \ov{d_{1,3}^{2,3,5}} & \ov{d_{1,3}^{2,4,5}}\\[6pt]
\ov{d_{2,3}^{2,3,4}} & \ov{d_{2,3}^{2,3,5}} & \ov{d_{2,3}^{2,4,5}}
\end{pmatrix}
\]
where the entries have been defined in \Cref{not:tripled}. We point out that the columns correspond to the coordinates of $\ee_3\ee_4,\ee_3\ee_5,\ee_4\ee_5$ with respect to the basis $\vv_{1,2}^1,\vv_{1,3}^1,\vv_{2,3}^1,\vv_{1,2}^2,\vv_{1,3}^2,\vv_{2,3}^2$.
\end{notation}

The next three lemmas can be proved straightforwardly by computing minors of matrices, however these computations are very long and tedious. For the sake of the reader's sanity (and the writers'), we provide, in the appendix, Macaulay2 code to check the identities displayed in the lemmas.
\begin{lemma}\label{lem:3x3E}
The following equalities hold

\begin{align*}
|E_{\ov{1,2,5};}|=\ov{c_{2,1,2}}|\ov{Q}_{\ov{1};}|,\quad\quad\quad & |E_{\ov{1,2,4};}|=\ov{c_{2,1,3}}|\ov{Q}_{\ov{1};}|,\\
|E_{\ov{1,3,6};}|=\ov{c_{2,1,1}}|\ov{Q}_{\ov{2};}|, \quad\quad\quad& |E_{\ov{1,3,4};}|=\ov{c_{2,1,3}}|\ov{Q}_{\ov{2};}|, \\
|E_{\ov{2,3,6};}|=\ov{c_{2,1,1}}|\ov{Q}_{\ov{3};}|, \quad\quad\quad& |E_{\ov{2,3,5};}|=\ov{c_{2,1,2}}|\ov{Q}_{\ov{3};}|, \\
|E_{\ov{2,4,5};}|=-\ov{c_{2,1,2}}|\ov{Q}_{\ov{4};}|, \quad\quad\quad& |E_{\ov{1,4,5};}|=-\ov{c_{2,1,3}}|\ov{Q}_{\ov{4};}|, \\
|E_{\ov{3,4,6};}|=-\ov{c_{2,1,1}}|\ov{Q}_{\ov{5};}|, \quad\quad\quad& |E_{\ov{1,4,6};}|=-\ov{c_{2,1,3}}|\ov{Q}_{\ov{5};}|, \\
|E_{\ov{3,5,6};}|=-\ov{c_{2,1,1}}|\ov{Q}_{\ov{6};}|, \quad\quad\quad& |E_{\ov{2,5,6};}|=-\ov{c_{2,1,2}}|\ov{Q}_{\ov{6};}|.
\end{align*}

\end{lemma}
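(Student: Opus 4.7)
My plan is to treat each of the twelve identities as a polynomial identity in the entries $\ov{c_{i,k,l}}$ of $\ov{Q}$, viewed formally as indeterminates over $\kk$. Both sides are homogeneous of degree six: the left-hand side is the $3\times 3$ determinant of a submatrix of $E$, each of whose entries is, by \Cref{not:tripled}, a signed $2\times 2$ minor of $\ov{Q}$, so it expands (via the Leibniz formula) as a signed sum of $3!=6$ products of three $2\times 2$ minors of $\ov{Q}$. The right-hand side is $\ov{c_{2,1,l}}$ multiplied by a $5\times 5$ minor of $\ov{Q}$, which expands similarly. The equality therefore reduces to a formal identity in the polynomial ring generated by the $c_{i,k,l}$.

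The structural mechanism behind each identity is the block-zero pattern of $\ov{Q}$: column 1 vanishes on rows 1--3 (the $k=1$ block) and column 2 vanishes on rows 4--6 (the $k=2$ block). The first six identities involve $\ov{Q}_{\ov{i};}$ for $i\in\{1,2,3\}$, and in each case the extra factor $\ov{c_{2,1,l}}$ on the right-hand side arises from a Laplace expansion of $|\ov{Q}_{\ov{i};}|$ along the two remaining $k=1$ rows, whose column-1 entries are zero; analogously, the last six identities involve removing a $k=2$ row, and the extra factor $\ov{c_{2,1,l}}$ appears via a Laplace expansion along two $k=2$ rows whose column-2 entries are zero. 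In each case, the three rows of $E$ appearing on the LHS package together $2\times 2$ minors of $\ov{Q}$ whose supporting pairs of rows cover exactly the five rows of $\ov{Q}$ that appear in $\ov{Q}_{\ov{i};}$ on the RHS, so the combinatorial skeleton of the matching is clear; the six-term LHS expansion then pairs up with the corresponding Laplace terms of the RHS after sign reconciliation using $\sigma_{i,j,r}$, $\sigma_{i,j,r,h,k}$, and the row-permutation signs inherent in the Laplace formula.

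The principal obstacle is the extensive sign and index bookkeeping required to carry this comparison out in full detail. There are twelve identities, each with six terms to track through multiple layers of sign conventions, and a by-hand verification is prohibitively long and error-prone. For this reason, I would verify the identities by symbolic computation: construct $\ov{Q}$ and $E$ over a polynomial ring $\kk[c_{i,k,l}]$ in Macaulay2 and check each of the twelve equalities as a polynomial identity. This is precisely the strategy adopted by the authors, with the verifying code provided in the appendix and referenced in the remark immediately preceding the lemma.
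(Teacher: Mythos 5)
Your proposal matches the paper's approach exactly: the authors likewise note that a by-hand verification is unreasonably long and instead check all twelve equalities as formal polynomial identities in the $c_{i,k,l}$ via the Macaulay2 code given in the appendix. Your structural heuristic about the block-zero pattern of $\ov{Q}$ and Laplace expansions is a sound explanation of \emph{why} the identities hold, though the paper does not spell it out and relies purely on the symbolic check.
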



The remaining $3\times3$ minors of $E$ are given by the following:

\begin{lemma}
\label{lem:3x3EPart2}
The following equalities hold

\begin{minipage}{.45\textwidth}
\begin{eqnarray*}
|E_{\ov{1,2,3};}|& = & 0, \\
|E_{\ov{1,2,6};}|& = & \ov{c_{2,1,1}}|\ov{Q}_{\ov{1};}|, \\
|E_{\ov{3,4,5};}|& = & -\ov{c_{2,1,1}}|\ov{Q}_{\ov{4};}|, \\
|E_{\ov{2,3,4};}|& = & -\ov{c_{2,1,1}}|\ov{Q}_{\ov{1};}|+\ov{c_{2,1,2}}|\ov{Q}_{\ov{2};}|,\\
\end{eqnarray*}
\end{minipage}
\begin{minipage}{.45\textwidth}
\begin{eqnarray*}
|E_{\ov{4,5,6};}| & = & 0,\\
|E_{\ov{1,3,5};}| & = & \ov{c_{2,1,2}}|\ov{Q}_{\ov{2};}|,\\
|E_{\ov{2,4,6};}| & = & -\ov{c_{2,1,2}}|\ov{Q}_{\ov{5};}|,\\
|E_{\ov{1,5,6};}| & = & \ov{c_{2,1,1}}|\ov{Q}_{\ov{4};}|-\ov{c_{2,1,2}}|\ov{Q}_{\ov{5};}|.\\
\end{eqnarray*}
\end{minipage}
\end{lemma}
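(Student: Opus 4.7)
The plan is to verify each of the eight displayed identities by direct symbolic computation. First, I would expand the entries of $E$ using the definition of $d^{k,i,j}_{\alpha,\beta}$ from \Cref{not:tripled}; each entry becomes a signed $2 \times 2$ determinant in the $\ov{c_{h,k,\alpha}}$'s, with sign $\sigma_{i,j,r}\sigma_{i,j,r,h,k}$ determined by the Heaviside functions in \eqref{eq:Sigma3Theta} and \eqref{eq:Sigma5Theta}. This produces an explicit polynomial expression, in the variables $\ov{c_{i,j,l}}$, for every entry of $E$.

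Next, for each of the eight specified triples of rows, I would compute the corresponding $3 \times 3$ determinant on the left-hand side by the standard Leibniz expansion. On the right-hand side, I would expand the $5 \times 5$ minors $|\ov{Q}_{\ov{i};}|$ by Laplace expansion along the first two columns, whose sparse block structure (with zeros in the top-left $3\times1$ and middle-left $3\times1$ blocks) makes the expansion manageable. Each claimed identity then reduces to checking equality of two explicit polynomials in $\ov{c_{i,j,l}}$.

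The principal obstacle is bookkeeping of signs: the factors $\sigma_{i,j,r}$ and $\sigma_{i,j,r,h,k}$ depend on the relative orderings of the indices $i,j,r,h,k$, and although \Cref{not:tripled} guarantees that the value of $d^{k,i,j}_{\alpha,\beta}$ is independent of the choice between the two orderings of $\{r,h\}$, a consistent convention must be fixed before expansion. Unlike the earlier \Cref{lem:3x3E}, where the vanishing rows and the appearance of a single $\ov{c_{2,1,\ell}}$ factor allowed a clean cofactor decomposition, the minors in \Cref{lem:3x3EPart2} (such as $|E_{\ov{2,3,4};}|$ and $|E_{\ov{1,5,6};}|$) involve genuine cancellations between two summands; these cancellations are exactly what produces the $\ov{c_{2,1,1}}|\ov{Q}_{\ov{i};}|\pm\ov{c_{2,1,2}}|\ov{Q}_{\ov{j};}|$ pattern on the right-hand side. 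The two identities $|E_{\ov{1,2,3};}|=0$ and $|E_{\ov{4,5,6};}|=0$ correspond to the cases where the three chosen rows all share the same index $k$, so that the three $2\times 2$ determinants in that row live in the same ambient $3\times 2$ matrix; the vanishing then follows from the Plücker/Laplace-type relation among the $2\times 2$ minors of a $3\times 2$ matrix.

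Because each identity is a polynomial equality in the ring $\kk[\ov{c_{i,j,l}}]$, once the symbolic expressions are set up with signs fixed the verification is mechanical but voluminous. I would implement this in Macaulay2: declare a polynomial ring in the variables $c_{i,j,l}$, build the matrices $E$ and $\ov{Q}$ symbolically using the formulas of \Cref{not:tripled}, and check that the difference of the two sides of each displayed equation vanishes. The explicit script is relegated to the appendix.
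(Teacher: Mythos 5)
Your proposal is correct and matches the paper's approach: the paper explicitly declines to write out the tedious minor computations and instead supplies Macaulay2 code (reproduced in the Appendix) that verifies the eight identities symbolically, exactly as you propose. Your additional remark that $|E_{\ov{1,2,3};}|$ and $|E_{\ov{4,5,6};}|$ vanish because the relevant columns of $E$ lie in a $2$-dimensional subspace of $\bigwedge^2\kk^3$ (all sharing the factor $\ov{c_{2,1,\cdot}}$) is a nice conceptual observation not made in the paper, but it does not change the method of proof.
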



\begin{lemma}\label{lem:2x2E}
Up to signs, the set of $2\times2$ minors of $E$ and the set of $4\times4$ minors of $\ov{Q}$ involving the first two columns coincide.
\end{lemma}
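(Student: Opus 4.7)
The plan is to construct an explicit bijection between the $2\times 2$ minors of $E$ and the $4\times 4$ minors of $\ov{Q}$ involving its first two columns, and then verify that corresponding minors agree up to a sign. A cardinality check is encouraging: both sets have $\binom{6}{2}\binom{3}{2}=\binom{6}{4}\binom{3}{2}=45$ elements, with the $15$ row-choices splitting as $6$ same-block pairs plus $9$ different-block pairs on the $E$ side, matching the $6$ ``$1{+}3$'' or ``$3{+}1$'' subsets plus $9$ ``$2{+}2$'' subsets on the $\ov{Q}$ side.

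For the column side of the bijection, each column of $E$ is indexed by a pair $\{i,j\}\subset\{3,4,5\}$, so choosing two columns of $E$ amounts to excluding one such pair, and I would send that choice to the $4\times 4$ minor of $\ov{Q}$ whose extra columns (beyond $1$ and $2$) are precisely the excluded $\{i,j\}$. For the row side, I think of each row $(k,(\alpha,\beta))$ of $E$ as recording the pair of row indices $\{3(k-1)+\alpha,\,3(k-1)+\beta\}$ of $\ov{Q}$: two rows of $E$ lying in different $k$-blocks then contribute four distinct row indices, which I take as the row set of the matching $4\times 4$ minor, while two rows of $E$ lying in the same block $k$ share a unique index $\gamma$, and I pair them with the row set consisting of all three rows of block $k$ together with the row $3(2-k)+\gamma$ of the other block.

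To check sign-coincidence of corresponding pairs of minors, I would argue case-by-case. In the same-block case only one row-pair in columns $1,2$ of the associated $4\times 4$ minor has a nonzero $2\times 2$ subminor, so Laplace expansion along that column reduces the minor to $\pm\ov{c_{2,1,\gamma}}$ times a $3\times 3$ minor in the $c$-entries, which can be identified with the relevant $2\times 2$ minor of $E$ by a direct expansion in the spirit of \Cref{lem:3x3E}. In the different-block case, Laplace expansion of the $4\times 4$ minor along columns $1$ and $2$ contributes only from the four mixed row-pairs, and the resulting sum reassembles into the $2\times 2$ minor of $E$ after applying the definition of the entries $d^{k,i,j}_{\alpha,\beta}$. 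The main obstacle is the sign bookkeeping across the $45$ matched pairs; as with the preceding two lemmas, I would delegate this step to a Macaulay2 script included in the appendix, which symbolically computes the two families of polynomials and verifies that, up to sign, they coincide.
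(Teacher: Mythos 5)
Your plan is sound in spirit but structured quite differently from the paper's proof. The paper does not construct any bijection: its proof of this lemma is the Macaulay2 computation given at the end of the appendix, which simply generates all $45 = \binom{6}{4}\binom{3}{2}$ of the relevant $4\times 4$ minors of $\ov{Q}$ and all $45 = \binom{6}{2}\binom{3}{2}$ of the $2\times 2$ minors of $E$, asserts every leading coefficient is $\pm1$, normalizes each polynomial to monic, applies \texttt{unique} and \texttt{sort} to both lists, and tests equality. In other words, the paper verifies exactly the set-level statement of the lemma by brute force, with no pairing of index choices. Your approach is more ambitious: you posit an explicit bijection of index sets (columns of $E$ via the excluded pair in $\{3,4,5\}$, rows of $E$ via the $(k,\{\alpha,\beta\})\leftrightarrow$ row-pair correspondence, with the same-block/different-block dichotomy matched to the ``$3{+}1$'' versus ``$2{+}2$'' row subsets of $\ov{Q}$), and you sketch the Laplace-expansion mechanism behind each case. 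If that pairing is right, it proves something slightly sharper than the lemma states — a canonical polynomial-by-polynomial correspondence rather than mere equality of deduplicated sets — and it also explains Lemmas \ref{lem:3x3E} and \ref{lem:3x3EPart2} in a unified way. The cost is that you have not actually verified the proposed row map (you flag it as a guess), so it would have to be confirmed case-by-case or by the same Macaulay2 check you end up invoking anyway. One small imprecision to be aware of: the two collections of \emph{index choices} each have $45$ elements, but the resulting \emph{sets of polynomials} may be strictly smaller after deduplication, which is precisely why the paper's code applies \texttt{unique} before comparing; your bijection, if correct, would in addition show that the multiplicities match.
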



\begin{theorem}\label{thm:t=2}
Let $I$ be a 5 generated grade 3 Gorenstein ideal not satisfying the $\mathbf{G}$-trimming condition. Let $J$ be the ideal obtained by trimming the first two pfaffian generators of $I$. Then the class of $J$ and its format, as functions of $p(T,2)$ or $\mathrm{rank}(\ov{Q})$, are

\begin{center}
\begin{tabular} {|c|c|c|c|} \hline
$p(T,2)$ & \rule{0pt}{2.4ex}$\mathrm{rank}(\ov{Q})$ &$\mathrm{Class}$ & $\mathrm{Format}$ \\\hline
1&3&$\mathbf{B}$&$\mathrm{(1,6,8,3)}$\\\hline
2&4&$\mathbf{H}\mathrm{(2,1)}$&$\mathrm{(1,5,7,3)}$\\\hline
3&5&$\mathbf{T}$&$\mathrm{(1,4,6,3)}$\\\hline
\end{tabular}
\end{center}
\end{theorem}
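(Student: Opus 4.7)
Following the structure of the proof of \Cref{thm:t=3}, the format is read off from \cite[Theorem 5.8]{Alexis}: the complex $C_\bullet$ has total ranks $(1,(5-t)+3t,5+3t,1+t)=(1,9,11,3)$ for $t=2$, and $\rank(\ov{Q})$ pivot entries split off that many trivial summands from degrees 1 and 2. Combining with \Cref{lem:gen2}, which gives $\rank(\ov{Q})=2+p(T,2)$, yields the format column of the table. Next, since the $F_2$-components of the products $e_i\cdot_F e_j$ for $3\le i<j\le 5$ lie in $\m$, one has $\rank_\kk \mathcal{T}_1^{\,2}=\rank E$, and \Cref{lem:3x3E,lem:3x3EPart2,lem:2x2E} identify $\rank E$ with $p(T,2)$: when $\rank(\ov{Q})=3$, all $4\times 4$ minors of $\ov{Q}$ vanish, so $\rank E\le 1$ by \Cref{lem:2x2E}, and failure of the $\mathbf{G}$-trimming condition forces $\rank E\ge 1$; when $\rank(\ov{Q})=4$, extending columns 1, 2 of $\ov{Q}$ to a basis of its column space yields a nonzero $4\times 4$ minor involving those columns, so $\rank E\ge 2$, while vanishing of all $5\times 5$ minors combined with \Cref{lem:3x3E,lem:3x3EPart2} gives $\rank E\le 2$; when $\rank(\ov{Q})=5$, a nonzero $5\times 5$ minor together with some $\ov{c_{2,1,l}}\neq 0$ (which must hold for columns 1, 2 to be pivot) yields a nonzero $3\times 3$ minor of $E$, so $\rank E=3$.

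I would then determine the class case by case, after relabelling so that the extra pivot columns of $\ov{Q}$ among $\{3,4,5\}$ are leftmost. In Case 1 ($\rank(\ov{Q})=3$, with column 3 the extra pivot), $\ff_4,\ff_5$ survive in $\mathcal{T}_2$ and $\ee_j\ff_j=\gg$ for $j=4,5$ (from the $\delta_{i,j}$-part of product (e)); the remaining $\ee_i\ff_j$ products for $j\in\{1,2\}$ are governed by $3\times 3$ minors of $\ov{Q}$ that all vanish since columns 4 and 5 restrict into the $2$-column span of the remaining pivots in rows $1$--$3$ and $4$--$6$, mirroring the argument in Case 1 of \Cref{thm:t=3}. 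Writing $\gamma_j$ for the coefficient of $\ff_3$ in the pivot-induced change $\ff_j'=\ff_j+\alpha_j\ff_1+\beta_j\ff_2+\gamma_j\ff_3$, one has $\ee_3\ff_j'=\gamma_j\gg$ for $j=4,5$; the change $\tilde{\ee_3}=\ee_3-\gamma_4\ee_4-\gamma_5\ee_5$ annihilates both $\tilde{\ee_3}\ff_4'$ and $\tilde{\ee_3}\ff_5'$ and simultaneously yields $\tilde{\ee_3}\ee_4=\tilde{\ee_3}\ee_5=0$, after unpacking the sign conventions linking $\gamma_j$ to the proportionality constants between the columns of $E$ (so only $\ee_4\ee_5$ remains as a nonzero $\ee\ee$ product); this yields class $\mathbf{B}$. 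In Case 3 ($\rank(\ov{Q})=5$) every $\ff_j$ is eliminated so $\mathcal{T}_2$ is spanned by $\vv$'s only; since products outside types (a) and (e) have coefficients in $\m$, $\mathcal{T}_1\cdot\mathcal{T}_2=0$, and combined with $\rank\mathcal{T}_1^{\,2}=3$ this gives class $\mathbf{T}$.

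The hard case is Case 2 ($\rank(\ov{Q})=4$, with columns 3, 4 the extra pivots), analogous to Case 2 of \Cref{thm:t=3}. After the pivot-induced change $\ff_5'=\ff_5+\alpha_5\ff_1+\beta_5\ff_2+\gamma_5\ff_3+\delta_5\ff_4$, the element $\ff_5'$ is the unique surviving $\ff$-type generator in $\mathcal{T}_2$, with $\ee_5\ff_5'\neq 0$ contributing the $\gg$ component. Setting $\ee_3'=\ee_3-\gamma_5\ee_5$ and $\ee_4'=\ee_4-\delta_5\ee_5$ annihilates the $\gg$-components of $\ee_i\ff_5'$ for $i=3,4$, and a further basis adjustment using the $1$-dimensional kernel of the multiplication $\wedge^2\langle\ee_3,\ee_4,\ee_5\rangle\to\mathcal{T}_2$ (automatically decomposable, being a $2$-vector in a $3$-dimensional space) realizes $\ee_3''\ee_4''=0$ while keeping $\ee_3''\ee_5,\ee_4''\ee_5$ linearly independent generators of $\mathcal{T}_1^{\,2}$. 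The main obstacle is verifying that the $\ww^1$- and $\ww^2$-components of $\ee_i\ff_5'$ vanish for $i=3,4$: this reduces to a determinantal identity between the pivot coefficients $\alpha_5,\beta_5$ and the $3\times 3$ minors of $\ov{Q}$ appearing in product (e), of the same flavor as the identities of \Cref{lem:3x3E,lem:3x3EPart2} (which are themselves verified by the Macaulay2 code in the appendix). Together, these yield class $\mathbf{H}(2,1)$.
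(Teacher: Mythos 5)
Your proposal follows the paper's structure closely: the format computation, the identification of $\rank_\kk\mathcal{T}_1^2$ with $\rank E$ via \Cref{lem:3x3E,lem:3x3EPart2,lem:2x2E}, and the $\mathbf{B}$/$\mathbf{T}$ cases all track the paper's argument. But your Case 2 has a genuine gap at the step you yourself flag as the ``main obstacle.'' You correctly identify that the crux is showing the $\ww^1$- and $\ww^2$-components of $\ee_i\ff_5'$ vanish for $i=3,4$, but you then defer it with the phrase ``reduces to a determinantal identity\ldots of the same flavor as the identities of \Cref{lem:3x3E,lem:3x3EPart2}.'' No such identity is stated or proved, and it is not among the lemmas verified by the Macaulay2 code (those concern minors of $E$ versus minors of $\ov{Q}$, not the relation between the pivot coefficients and the $3\times 3$ minors in product (e)). The paper closes precisely this gap with an explicit Cramer's rule argument: after expanding $\ee_4'\ff_5'$, the $\ww^1$-component is a linear combination of two $3\times 3$ determinants whose ratio is exactly $\alpha_4$ by Cramer's rule applied to the system \eqref{eq:AugMat}, and the degenerate case (both determinants zero simultaneously) is handled separately by inspecting the augmented matrix. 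Without this or an equivalent computation, your Case 2 asserts rather than proves $\rank_\kk(\mathcal{T}_1\mathcal{T}_2)=1$.

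A secondary issue in Case 2: the detour through ``the $1$-dimensional kernel of $\wedge^2\langle\ee_3,\ee_4,\ee_5\rangle\to\mathcal{T}_2$'' is both unverified and unnecessary. Once $\rank_\kk(\mathcal{T}_1^2)=2$ and $\rank_\kk(\mathcal{T}_1\mathcal{T}_2)=1$ are established, these ranks uniquely pin down the class $\mathbf{H}(2,1)$ within the Avramov--Kustin--Miller classification, so there is no need to exhibit a normalized basis realizing the product table. The paper's proof deliberately limits itself to computing these two ranks for exactly this reason, and you should do the same.
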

\begin{proof}
It follows from \cite[Theorem 5.8]{Alexis} that the format of $J$ is
\[
(1,9-\mathrm{rank}(\ov{Q}),11-\mathrm{rank}(\ov{Q}),3).
\]
By \Cref{lem:gen2} one has that 
\[
\rank(\ov{Q})=2+p(T,2).
\]
The previous two displays give the format of $J$ in the three cases of the theorem.

Without loss of generality we can assume that $\ov{Q}$ is in its reduced row echelon form, since row operations affect basis elements that do not contribute to the product in $\mathcal{T}$.

\textbf{Case 1}: $p(T,2)=1$.

In this case $\mathrm{rank}(\ov{Q})=3$ and therefore all the $4\times4$ minors of $\ov{Q}$ are zero. By \Cref{lem:2x2E}, it follows that the $2\times2$ minors of $E$ are zero, and therefore
\[
\mathrm{rank}_\kk(\mathcal{T}_1^{\hspace{0.02cm}2})=1.
\]
We assume that the third column of $\ov{Q}$ is a pivot column, the other two cases are similar. 
After an appropriate change of basis
\[
\ff_i'=\ff_i+\alpha_{1,i}\ff_1+\alpha_{2,i}\ff_2+\alpha_{3,i}\ff_3,\quad\alpha_{1,i},\alpha_{2,i},\alpha_{3,i}\in\kk\;\mathrm{for}\;i=1,2,3,4,5,
\]
one can split off the nonminimal part of $\overline{C_\bullet}$.

Therefore the only two elements of $\mathcal{T}_2$ that may have nonzero products with elements of $\mathcal{T}_1$ are $\ff_4'$ and $\ff_5'$. 

We set $\ee_3'=\ee_3-\alpha_{3,4}\ee_4-\alpha_{3,5}\ee_5$. Since $\ee_1$ and $\ee_2$ have a product of zero when multiplied with $\ff_4'$ or $\ff_5'$, we study the products between $\ee_3',\ee_4,\ee_5$ and $\ff_4',\ff_5'$.

We claim that $\ee_4\ff_4'=\ee_5\ff_5'=\gg$. To show this, it suffices to show that 
\[
\ee_4\ff_1=\ee_4\ff_2=\ee_4\ff_3=\ee_5\ff_1=\ee_5\ff_2=\ee_5\ff_3=0.
\]
The products $\ee_4\ff_3$ and $\ee_5\ff_3$ are zero by definition of the products e. in \Cref{rmk:DG}. The coefficient appearing in the product $\ee_4\ff_1$ is, up to a sign, the determinant of the matrix $\ov{Q}_{\ov{4,5,6};\ov{1,4}}$. If this determinant is nonzero, then either the fourth or fifth column of $\ov{Q}$ would be a pivot column, a contradiction. Therefore $\ee_4\ff_1=0$. A similar argument works for the remaining equalities.
This also shows that $\ee_4\ff_5'=\ee_5\ff_4'=0$. It remains to notice that
\[
\ee_3'\ff_4'=\alpha_{3,4}\gg-\alpha_{3,4}\gg=0,
\]
since one can argue that $\ee_3\ff_1=\ee_3\ff_2=0$ as above. A similar argument shows that $\ee_3'\ff_5'=0$. This proves that $J$ is of class $\mathbf{B}$.

\textbf{Case 2}: $p(T,2)=2$.

To prove that $J$ is of class $\mathbf{H}(2,1)$ it suffices to show that
\[
\mathrm{\rank}_\kk(\mathcal{T}_1^{\hspace{0.02cm}2})=2,\quad\mathrm{and}\quad\mathrm{rank}_\kk(\mathcal{T}_1\mathcal{T}_2)=1.
\]

Since in this case $\rank(\ov{Q})=4$, by \Cref{lem:3x3E} and \Cref{lem:3x3EPart2} the $3\times3$ minors of $E$ are zero. Moreover, by \Cref{lem:2x2E} there is a nonzero $2\times2$ minor of $E$ because if $\rank(\ov{Q})=4$, then there has to be a $4\times4$ minor, involving the pivot columns, that is nonzero, and by \Cref{lem:gen2} we know that the first two columns of $\ov{Q}$ are pivot columns. This implies that
\[
\mathrm{rank}_\kk(\mathcal{T}_1^{\hspace{0.02cm}2})=2.
\]

To prove $\mathrm{rank}_\kk(\mathcal{T}_1\mathcal{T}_2)=1$ we assume that the fifth column is not a pivot column, the remaining two cases are similar. Since the first four columns of $\ov{Q}$ are pivot columns, and the fifth one is not, the following system of equations has a unique solution
\[
\begin{pmatrix}
0& \ov{c_{2,1,1}} & \ov{c_{3,1,1}} & \ov{c_{4,1,1}} \\
0& \ov{c_{2,1,2}} & \ov{c_{3,1,2}} & \ov{c_{4,1,2}} \\
0& \ov{c_{2,1,3}} & \ov{c_{3,1,3}} & \ov{c_{4,1,3}} \\\hdashline
-\ov{c_{2,1,1}} &0 & \ov{c_{3,2,1}} & \ov{c_{4,2,1}} \\
-\ov{c_{2,1,2}} &0 & \ov{c_{3,2,2}} & \ov{c_{4,2,2}} \\
-\ov{c_{2,1,3}} &0 & \ov{c_{3,2,3}} & \ov{c_{4,2,3}} 
\end{pmatrix}\begin{pmatrix}\alpha_1\\\alpha_2\\\alpha_3\\\alpha_4\end{pmatrix}=\begin{pmatrix}
 \ov{c_{5,1,1}}\\
 \ov{c_{5,1,2}}\\
\ov{c_{5,1,3}}\\\hdashline
 \ov{c_{5,2,1}}\\
 \ov{c_{5,2,2}}\\
 \ov{c_{5,2,3}}
\end{pmatrix}.
\]
Set
\[
\ff_5'=\ff_5-\alpha_1\ff_1-\alpha_2\ff_2-\alpha_3\ff_3-\alpha_4\ff_4.
\]
In the basis $\ff_1,\ldots,\ff_4,\ff_5'$, the linear transformation defined by the matrix $\ov{Q}$ has the zero column as the last column. A further change of the basis elements $\ff_1,\ldots,\ff_4$ allows us to split off the nonminimal part of $\ov{C_\bullet}$. Set
\[
\ee_4'=\ee_4+\alpha_4\ee_5,\quad \ee_3'=\ee_3+\alpha_3\ee_5.
\]
Clearly $\ee_5\ff_5'\neq0$. To finish the proof we show that $\ee_3'\ff_5'=\ee_4'\ff_5'=0$. We check the last equality, the remaining one is similar.

Taking the product $\ee_4'\ff_5'$ yields
\begin{align*}
\alpha_1\begin{vmatrix}\ov{c_{2,1,1}}&\ov{c_{3,1,1}}&\ov{c_{5,1,1}}\\
\ov{c_{2,1,2}}&\ov{c_{3,1,2}}&\ov{c_{5,1,2}}\\
\ov{c_{2,1,3}}&\ov{c_{3,1,3}}&\ov{c_{5,1,3}}\end{vmatrix}\ww^1-\alpha_2\begin{vmatrix}-\ov{c_{2,1,1}}&\ov{c_{3,2,1}}&\ov{c_{5,2,1}}\\
-\ov{c_{2,1,2}}&\ov{c_{3,2,2}}&\ov{c_{5,2,2}}\\
-\ov{c_{2,1,3}}&\ov{c_{3,2,3}}&\ov{c_{5,2,3}}\end{vmatrix}\ww^2-\alpha_4\gg+\\
-\alpha_4\alpha_1\begin{vmatrix}\ov{c_{2,1,1}}&\ov{c_{3,1,1}}&\ov{c_{4,1,1}}\\
\ov{c_{2,1,2}}&\ov{c_{3,1,2}}&\ov{c_{4,1,2}}\\
\ov{c_{2,1,3}}&\ov{c_{3,1,3}}&\ov{c_{4,1,3}}\end{vmatrix}\ww^1+\alpha_4\alpha_2\begin{vmatrix}-\ov{c_{2,1,1}}&\ov{c_{3,2,1}}&\ov{c_{4,2,1}}\\
-\ov{c_{2,1,2}}&\ov{c_{3,2,2}}&\ov{c_{4,2,2}}\\
-\ov{c_{2,1,3}}&\ov{c_{3,2,3}}&\ov{c_{4,2,3}}\end{vmatrix}\ww^2+\alpha_4\gg.
\end{align*}
The $\gg$ component of this product obviously vanishes. We prove that the $\ww^1$ component vanishes as well, the proof of the vanishing of the $\ww^2$ component is similar. We can assume that $\alpha_1\neq0$. We first show that the two determinants are zero simultaneously.

Before proceeding, we point out that the following equality holds
\begin{equation}\label{eq:AugMat}
\begin{pmatrix}
\ov{c_{2,1,1}}&\ov{c_{3,1,1}}&\ov{c_{4,1,1}}\\
\ov{c_{2,1,2}}&\ov{c_{3,1,2}}&\ov{c_{4,1,2}}\\
\ov{c_{2,1,3}}&\ov{c_{3,1,3}}&\ov{c_{4,1,3}}
\end{pmatrix}\begin{pmatrix}\alpha_2\\\alpha_3\\\alpha_4\end{pmatrix}=\begin{pmatrix}\ov{c_{5,1,1}}\\\ov{c_{5,1,2}}\\\ov{c_{5,1,3}}\end{pmatrix}.
\end{equation}
If the determinant of the matrix of coefficients of \eqref{eq:AugMat} is zero then, the last row of the row reduced echelon form of the augmented matrix of this system is zero. This shows that
\[
\begin{vmatrix}\ov{c_{2,1,1}}&\ov{c_{3,1,1}}&\ov{c_{5,1,1}}\\
\ov{c_{2,1,2}}&\ov{c_{3,1,2}}&\ov{c_{5,1,2}}\\
\ov{c_{2,1,3}}&\ov{c_{3,1,3}}&\ov{c_{5,1,3}}\end{vmatrix}=0
\]
as well. The reverse implication is similar. If these two determinants are nonzero, then we need to show that
\[
\alpha_4=\frac{\quad\begin{vmatrix}\ov{c_{2,1,1}}&\ov{c_{3,1,1}}&\ov{c_{5,1,1}}\\
\ov{c_{2,1,2}}&\ov{c_{3,1,2}}&\ov{c_{5,1,2}}\\
\ov{c_{2,1,3}}&\ov{c_{3,1,3}}&\ov{c_{5,1,3}}\end{vmatrix}\quad}{\begin{vmatrix}\ov{c_{2,1,1}}&\ov{c_{3,1,1}}&\ov{c_{4,1,1}}\\
\ov{c_{2,1,2}}&\ov{c_{3,1,2}}&\ov{c_{4,1,2}}\\
\ov{c_{2,1,3}}&\ov{c_{3,1,3}}&\ov{c_{4,1,3}}\end{vmatrix}}.
\]
This follows by applying Cramer's rule to the system \eqref{eq:AugMat}.

\textbf{Case 3}: $p(T,2)=3$.

Since in this case $\mathrm{rank}(\ov{Q})=5$, by \Cref{lem:3x3E} it follows that $E$ has a nonzero $3\times3$ minor. Indeed, if for example $|\ov{Q}_{\ov{1};}|\neq0$, then either $\ov{c_{2,1,2}}\neq0$ or $\ov{c_{2,1,3}}\neq0$, otherwise if they were both zero, then $\ov{Q}_{\ov{1};}$ would have a zero column. So either $|E_{\ov{1,2,5};}|\neq0$ or $|E_{\ov{1,2,4};}|\neq0$.
This shows that the products $\ee_1\ee_2,\ee_1\ee_3,\ee_2\ee_3$ are linearly independent, which implies that
\[
\mathrm{rank}_\kk(\mathcal{T}_1^{\hspace{0.02cm}2})=3.
\]
Since all the columns of $\ov{Q}$ are pivot columns it follows that after an appropriate change of basis 
\[
\{\ff_1,\ldots,\ff_5\}\rightarrow\{\ff_1',\ldots,\ff_5'\},
\]
the basis elements $\ff_1',\ldots,\ff_5'$ are split off from the minimal resolution of $R/J$, therefore
\[
\mathrm{rank}_\kk(\mathcal{T}_1\mathcal{T}_2)=0.
\]
This is the product structure of a Tor algebra of class $\mathbf{T}$.
\end{proof}

\section{Realizability}
In this section we show that the eight cases from \Cref{thm:t=3}, \Cref{thm:t=2} and \eqref{eq:t=1} are realized. We provide a single ideal which realizes all cases. Let $R$ be the ring $\mathbb{F}_2[[x,y,z]]$. We identify $z_1,z_2,z_3$ from the previous section with $x,y,z$ respectively. Consider the matrix
\[
T=\begin{pmatrix}
0&y+z&z&z+y^2&z\\
y+z&0&x&x+y+z&x+y+z+z^2\\
z&x&0&z&x+z\\
z+y^2&x+y+z&z&0&x^2\\
z&x+y+z+z^2&x+z&x^2&0
\end{pmatrix},
\]
then one has
\[
\pf{1}{T}=x^3+z^3+x^2+xy+xz,\quad\pf{2}{T}=xy^2+x^2z+y^2z+xz,\quad\pf{3}{T}=y^2z^2+x^2y+xy^2+y^3+x^2z+y^2z+z^3
\]
\[
\pf{4}{T}=z^3+xy+xz,\quad\pf{5}{T}=xy^2.
\]
Let $I$ be the ideal generated by these elements.
\begin{proposition}
The ideal $I$ is a Gorenstein ideal of grade 3 minimally generated by five elements.
\end{proposition}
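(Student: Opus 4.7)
The plan is to apply the Buchsbaum--Eisenbud structure theorem used in \Cref{ch:GorRes}, which in its converse form says that whenever $T$ is a skew-symmetric matrix of odd size $m$ with entries in $\mathfrak{m}$ and the ideal $I$ generated by its submaximal pfaffians has grade $3$, the ring $R/I$ is Gorenstein with minimal free resolution given by the complex described there; in particular $I$ is minimally generated by $m$ elements. So the proof amounts to checking (i) that $T$ is skew-symmetric with zero diagonal and entries in $\mathfrak{m}$, and (ii) that $\operatorname{grade}(I) = 3$.

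For (i), inspection of the displayed matrix shows the diagonal is zero and all entries lie in $\mathfrak{m} = (x,y,z)$. Since $\operatorname{char}(R) = 2$, the skew-symmetry condition $T_{ij} = -T_{ji}$ coincides with $T_{ij} = T_{ji}$, and this is verified entry by entry. The listed values of $\pf{i}{T}$ can be confirmed directly from the standard expansion of the pfaffian of a $4\times 4$ alternating matrix; only the values themselves are needed in what follows.

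For (ii), since $R$ is Cohen--Macaulay of dimension $3$, it suffices to show $\sqrt{I} = \mathfrak{m}$, and I would argue $x, z, y \in \sqrt{I}$ in that order. In characteristic $2$ one computes
\[
\pf{1}{T} + \pf{4}{T} = x^3 + x^2 = x^2(1+x);
\]
since $1+x$ is a unit this yields $x^2 \in I$, hence $x \in \sqrt{I}$. Reducing $\pf{1}{T} \in I$ modulo $(x) \subseteq \sqrt{I}$ leaves $z^3$, so $z \in \sqrt{I}$. Finally, reducing $\pf{3}{T} \in I$ modulo $(x, z) \subseteq \sqrt{I}$ leaves $y^3$, so $y \in \sqrt{I}$. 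Thus $\sqrt{I} = \mathfrak{m}$, $I$ is $\mathfrak{m}$-primary, and $\operatorname{grade}(I) = 3$, at which point the Buchsbaum--Eisenbud theorem delivers the proposition.

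The one substantive step is the grade computation; within that, the only nonobvious move is spotting that in characteristic $2$ the pair $\pf{1}{T}$, $\pf{4}{T}$ combines to isolate a power of $x$, after which the reductions modulo $(x)$ and $(x,z)$ fall out of the visible shapes of $\pf{1}{T}$ and $\pf{3}{T}$. Alternatively, one could simply certify $\operatorname{grade}(I) = 3$ by a short Macaulay2 computation, consistent with the paper's use of Macaulay2 elsewhere.
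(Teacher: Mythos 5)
Your proof is correct and follows the same overall strategy as the paper: both reduce, via Buchsbaum--Eisenbud, to showing $\operatorname{grade}(I)=3$, and both begin with the observation that $\pf{1}{T}+\pf{4}{T}=x^2(1+x)$ puts $x^2$ in $I$. Where you diverge is in completing the grade computation. The paper exhibits an explicit regular sequence $x^2, y^5, z^6$ in $I$ by writing down (rather unwieldy) coefficient identities for $y^5$ and $z^6$ as $R$-combinations of the pfaffians. You instead argue via radicals: after $x\in\sqrt{I}$, reducing $\pf{1}{T}$ modulo $(x)$ gives $z^3\in I+(x)\subseteq\sqrt{I}$, and reducing $\pf{3}{T}$ modulo $(x,z)$ gives $y^3\in I+(x,z)\subseteq\sqrt{I}$, so $\sqrt{I}=\mathfrak m$ and hence $\operatorname{ht}(I)=\operatorname{grade}(I)=3$ since $R$ is regular. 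Your route is cleaner: it avoids the need to guess and verify the long coefficient expressions for $y^5$ and $z^6$, replacing them with two easy reductions. The paper's version, by constructing an explicit regular sequence, is somewhat more self-contained (no appeal to grade $=$ height in Cohen--Macaulay rings), but that fact is standard and your use of it is unobjectionable.
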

\begin{proof}
By \cite[Theorem 2.1]{BuchEisen} it suffices to show that the ideal $I$ has grade 3. One can check that the following equalities hold
\begin{align*}
x^2&=\frac{1}{1+x}(\pf{1}{T}+\pf{4}{T}),\\
y^5&=(y^2z+y^2)\pf{2}{T}+y^2\pf{3}{T}+y^2\pf{4}{T}+(y^2z+xz^2+xy+z^2+y)\pf{5}{T},\\
z^6&=\frac{1}{1+z^4}((xy^3+x^2z+xz)\pf{2}{T}+(z^7+xyz^4+xz^5+x^2y^2z+x^2z^3+x^3y+x^3z+z^3+xy+xz)\pf{4}{T}\\
&+(xy^3+y^3z+x^2z^2+x^3+x^2z+xyz+xz^2+xz+z^2+x)\pf{5}{T}).
\end{align*}
Therefore the elements $x^2,y^5,z^6$ form a regular sequence in $I$ of length 3.
\end{proof}

\begin{theorem}
The classes of the following trimmings of $I$ are
\begin{center}
\begin{tabular}{|c|c|}
\hline
\quad\quad$\mathrm{Generators\;trimmed}$\quad\quad  & \quad$\mathrm{Class}$\quad\quad  \\
\hline \hline
$\pf{1}{T},\pf{2}{T},\pf{4}{T}$ &$\mathbf{B}$\\\hline
 $\pf{1}{T},\pf{2}{T},\pf{3}{T}$&$\mathbf{H}(1,1)$\\\hline
 $\pf{3}{T},\pf{4}{T},\pf{5}{T}$&$\mathbf{H}(1,0)$\\ \hline
 $\pf{1}{T},\pf{2}{T}$&$\mathbf{B}$\\\hline
 $\pf{3}{T},\pf{4}{T}$&$\mathbf{H}(2,1)$\\\hline
 $\pf{3}{T},\pf{5}{T}$&$\mathbf{T}$\\\hline
 $\pf{1}{T}$&$\mathbf{B}$\\\hline
 $\pf{5}{T}$&$\mathbf{H}(3,2)$\\\hline
\end{tabular}
\end{center}
\end{theorem}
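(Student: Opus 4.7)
The plan is to apply the classification results already established---\Cref{thm:t=3} for the three trimmings with $t=3$, \Cref{thm:t=2} for the three trimmings with $t=2$, and the display \eqref{eq:t=1} for the two trimmings with $t=1$. Each of these reduces the determination of the class to two linear-algebraic invariants of the mod-$\m$ reduction $\ov{Q}$: its rank and the integer $p(T,t)$, i.e., the number of pivot columns among the last $5-t$ columns after row reduction. The proof therefore amounts to eight matrix computations over $\mathbb{F}_2$, one per row of the table.

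For each trimming, the first step is to apply a common permutation to the rows and columns of $T$ so that the generators being trimmed occupy positions $1,\ldots,t$; this preserves skew-symmetry and leaves the trimmed ideal itself (and hence its Tor algebra class) unchanged. The $k$-th $3\times 5$ block of $\ov{Q}$ is then assembled by extracting, from each entry in the $k$-th row of the permuted $T$, the coefficient of $z_l$ in its linear part and placing it in row $l$. Higher-degree contributions, such as the $y^2$ appearing in $T_{1,4}$, vanish modulo $\m$ and are ignored.

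With $\ov{Q}$ in hand, one verifies that the $\mathbf{G}$-trimming condition fails---so that the applicable theorem actually governs the class---and then row-reduces $\ov{Q}$ over $\mathbb{F}_2$ to read off $\rank(\ov{Q})$ and $p(T,t)$. The relevant row of the tables in \Cref{thm:t=3}, \Cref{thm:t=2}, or in \eqref{eq:t=1} then delivers the claimed class.

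The only real obstacle is uniform bookkeeping across the eight cases; no conceptual ingredient beyond the earlier theorems is required. To illustrate the pattern, for the trimming of $\pf{1}{T},\pf{2}{T},\pf{4}{T}$, after the relevant reordering the first three columns of $\ov{Q}$ are pivot (as guaranteed by \Cref{lem:gen3}) while the last two lie in their $\mathbb{F}_2$-span, giving $p(T,3)=0$, so \Cref{thm:t=3} returns class $\mathbf{B}$ with the correct format. The remaining seven entries are confirmed by analogous elementary row-reductions, which can be executed uniformly with the Macaulay2 routines already introduced in the appendix.
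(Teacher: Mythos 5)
Your proposal follows exactly the paper's own strategy: for each of the eight trimmings, conjugate $T$ by a permutation matrix to bring the trimmed generators to positions $1,\ldots,t$ (the paper cites \cite[Remark 5.12]{Alexis} for the validity of this step), assemble $\ov{Q}$ from the linear parts of the permuted matrix's entries, verify the $\mathbf{G}$-trimming condition fails, read off $\rank(\ov{Q})$ and $p(T,t)$, and invoke \Cref{thm:t=3}, \Cref{thm:t=2}, or \eqref{eq:t=1}. The paper works out two of the eight cases in full detail and declares the rest similar; your outline matches this logic, so the two are in agreement.
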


\begin{proof}
We first prove that if one trims the generators $\pf{1}{T},\pf{2}{T}$ and $\pf{3}{T}$, then one obtains an ideal of class $\mathbf{H}(1,1)$. The matrix $\ov{Q}$ associated to this trimming is
\[
\ov{Q}=\begin{pmatrix}
0&0&0&0&0\\
0&1&0&0&0\\
0&1&1&1&1\\\hdashline
0&0&1&1&1\\
1&0&0&1&1\\
1&0&0&1&1\\\hdashline
0&1&0&0&1\\
0&0&0&0&0\\
1&0&0&1&1
\end{pmatrix}.
\]
The $\mathbf{G}$-trimming condition is not satisfied since the determinant of the submatrix $\ov{Q}_{2,3;2,3}$ is nonzero. It is elementary to see that the rank of this matrix is 4, and therefore by \Cref{thm:t=3} the trimmed ideal is of class $\mathbf{H}(1,1)$.

We next show that if one trims the generators $\pf{3}{T},\pf{4}{T}$ and $\pf{5}{T}$, then one obtains an ideal of class $\mathbf{H}(1,0)$. We need to present the ideal $I$ with a matrix $T'$ such that
\[
\pf{1}{T'}=\pf{3}{T},\quad \pf{2}{T'}=\pf{4}{T},\quad\pf{3}{T'}=\pf{5}{T}.
\]
By \cite[Remark 5.12]{Alexis}, to find $T'$ one can conjugate $T$ by the permutation matrix associated with the permutation $(1, 4, 2, 5,3)$. Let
\[
P\colonequals\begin{pmatrix}
0&0&0&1&0\\
0&0&0&0&1\\
1&0&0&0&0\\
0&1&0&0&0\\
0&0&1&0&0
\end{pmatrix},
\]
then the matrix $T'\colonequals PTP^{-1}$ has the desired property. A computation shows that
\[
T'=\begin{pmatrix}
0&z&x+z&z&x\\
z&0&x^2&z+y^2&x+y+z\\
x+z&x^2&0&z&x+y+z+z^2\\
z&z+y^2&z&0&y+z\\
x&x+y+z&x+y+z+z^2&y+z&0
\end{pmatrix}.
\]
The matrix $\ov{Q'}$ associated to $T'$ is
\[
\ov{Q'}=\begin{pmatrix}
0&0&1&0&1\\
0&0&0&0&0\\
0&1&1&1&0\\\hdashline
0&0&0&0&1\\
0&0&0&0&1\\
1&0&0&1&1\\\hdashline
1&0&0&0&1\\
0&0&0&0&1\\
1&0&0&1&1
\end{pmatrix}.
\]
The $\mathbf{G}$-trimming condition is not satisfied since the determinant of $\ov{Q'}_{1,3;2,3}$ is nonzero. An elementary computation shows that the rank of this matrix is 5, therefore by \Cref{thm:t=3} the trimmed ideal is of class $\mathbf{H}(1,0)$. The remaining cases are proved similarly.
\end{proof}


\appendix

\section{Macaulay2 Code}

In this appendix, we include the code used to check the claims made
in Lemmas \ref{lem:3x3E}, \ref{lem:3x3EPart2} and \ref{lem:2x2E}.  We begin 
by defining the unit step function as well as the functions \texttt{sigma3}
and \texttt{sigma5} which implement the $\sigma$ coefficients with three
and five subscripts, respectively:
\begin{lstlisting}[language=Macaulay2]
unitStep = n -> if (n > 0) then 1 else 0
sigma3 = (i,j,r) -> (-1)^(i + j + r + 1 + unitStep(r-i) + unitStep(r-j) + unitStep(j-i))
sigma5 = (i,j,r,h,k) -> (-1)^(h + k + 1 + unitStep(k-i) + unitStep(k-j) + 
                              unitStep(k-r) + unitStep(k-h) + unitStep(h-i) +
                              unitStep(h-j) + unitStep(h-r))
\end{lstlisting}
Using these functions, we can define the entries of the matrix $E$
as in Notation \ref{not:tripled}:
\begin{lstlisting}[language=Macaulay2]
entryInE = (Q,k,i,j,alpha,beta) -> (
   Q' := if (k == 1) then Q^{0,1,2} else Q^{3,4,5};
   rAndh := toList (set toList (1..5) - set {k,i,j});
   r := first rAndh;
   h := last rAndh;
   Q'' := Q'_{r-1,h-1};
   Q''' = Q''^{alpha-1,beta-1};
   sigma3(i,j,r) * sigma5(i,j,r,h,k) * det(Q''')
)
\end{lstlisting}
Finally, we use the following function to define $E$ in terms of $\ov{Q}$:
\begin{lstlisting}[language=Macaulay2]
getEMatrix = Q -> (
   tuples := { {({1,3,4},{1,2}), ({1,3,5},{1,2}), ({1,4,5},{1,2})},
               {({1,3,4},{1,3}), ({1,3,5},{1,3}), ({1,4,5},{1,3})},
	       {({1,3,4},{2,3}), ({1,3,5},{2,3}), ({1,4,5},{2,3})},
	       {({2,3,4},{1,2}), ({2,3,5},{1,2}), ({2,4,5},{1,2})},
               {({2,3,4},{1,3}), ({2,3,5},{1,3}), ({2,4,5},{1,3})},
	       {({2,3,4},{2,3}), ({2,3,5},{2,3}), ({2,4,5},{2,3})} };
   matrix applyTable(tuples, (p,q) -> entryInE(Q,p#0,p#1,p#2,q#0,q#1))
)
\end{lstlisting}
To verify the claims made in Lemmas \ref{lem:3x3E} and \ref{lem:3x3EPart2},
one defines a ring with variables corresponding to the entries of $\ov{Q}$,
and creates $E$ using \texttt{getEMatrix} as above, and then one checks
the desired equalities as in the last line given below.
\begin{lstlisting}[language=Macaulay2]
R=ZZ[x_211,x_212,x_213,x_311,x_312,x_313,
     x_411,x_412,x_413,x_511,x_512,x_513,
     x_321,x_322,x_323,x_421,x_422,x_423,
     x_521,x_522,x_523]
Q=matrix{{0,x_211,x_311,x_411,x_511},
         {0,x_212,x_312,x_412,x_512},
	 {0,x_213,x_313,x_413,x_513},
	 {-x_211,0,x_321,x_421,x_521},
	 {-x_212,0,x_322,x_422,x_522},
	 {-x_213,0,x_323,x_423,x_523}}
E = getEMatrix Q
det E^{0,4,5} == -x_211*det(Q^{1,2,3,4,5}) + x_212*det(Q^{0,2,3,4,5})
\end{lstlisting}
To verify Lemma \ref{lem:2x2E}, we compute the list of all specified
$4\times 4$ minors of $\ov{Q}$, as well as the $2 \times 2$ minors of $E$.
We check that all leading coefficients are $\pm 1$, make them monic,
remove duplicates, sort the lists, and then compare them for equality:
\begin{lstlisting}[language=Macaulay2]
cols = select(subsets({0,1,2,3,4},4), p -> member(0,p) and member(1,p))
rows = subsets({0,1,2,3,4,5},4)
minorIndices = rows ** cols
minors4Q = apply(minorIndices, p -> det Q^(p_0)_(p_1));
assert all(minors4Q, f -> leadCoefficient f == 1 or leadCoefficient f == -1)
minors4Q = sort unique apply(minors4Q, f -> f * (leadCoefficient(f))^(-1));
minors2E = (minors(2,E))_*;
assert all(minors2E, f -> leadCoefficient f == 1 or leadCoefficient f == -1)
minors2E = sort unique apply(minors2E, f -> f * (leadCoefficient(f))^(-1));
minors4Q == minors2E
\end{lstlisting}


\bibliographystyle{amsplain}
\bibliography{biblio}
\end{document}